\DeclareMathOperator*{\argmin}{argmin}
\DeclareMathOperator*{\argmax}{argmax}
\theoremstyle{definition}
\newtheorem{definition}{Definition}[section]
\newtheorem*{remark}{Remark}
\newtheorem{theorem}{Theorem}
\newtheorem{assumption}{Assumption}
\newtheorem{lemma}{Lemma}
\newcommand{\Xcal}{\mathcal{X}}
\newcommand{\Ycal}{\mathcal{Y}}
\newcommand{\Rcal}{\mathcal{R}}
\newcommand{\Fcal}{\mathcal{F}}
\newcommand{\Scal}{\mathcal{S}}
\newcommand{\Dcal}{\mathcal{D}}
\newcommand{\Gcal}{\mathcal{G}}
\newcommand{\Acal}{\mathcal{A}}
\newcommand{\Mcal}{\mathcal{M}}
\newcommand{\Ecal}{\mathcal{E}}
\newcommand{\Rfrak}{\mathfrak{R}}
\newcommand{\Rbb}{\mathbb{R}}
\newcommand{\Ebb}{\mathbb{E}}
\newcommand{\Ebra}[1]{\Ebb\left[ #1 \right]}
\newcommand{\Eb}[1]{\Ebb_{#1}}
\newcommand{\Sreg}{\Scal_\text{reg}} 
\newcommand{\freg}{f^{-1}_\text{reg}} 
\newcommand{\phit}{\phi_{\theta^\ast}} 
\newcommand{\Elearn}{\Ecal_\text{learning}}
\newcommand{\Eapprox}{\Ecal_\text{approximation}}
\newcommand{\Egeneral}{\Ecal_\text{generalization}}
\newcommand{\FNN}{\Fcal_\text{NN}}
\newcommand{\vep}{\varepsilon}
\newcommand{\fregi}{f^{-1}_\text{reg,$i$}} 
\title{let data talk: data-regularized operator learning theory for inverse problems
}
\author{
  Ke Chen \\
  Department of Mathematics \\
  University of Maryland, College Park \\
  College Park, MD, 20742\\
  \texttt{kechen@umd.edu} \\
   \And
  Chunmei Wang \\
  Department of Mathematics \\
  University of Florida \\
  Gainsville, FL, 32611\\
  \texttt{chunmei.wang@ufl.edu} \\
   \And
  Haizhao Yang \\
  Department of Mathematics \\
  University of Maryland, College Park \\
  College Park, MD, 20742\\
  \texttt{hzyang@umd.edu} \\
}
\begin{document}
\maketitle


\begin{abstract}
Regularization plays a critical role in incorporating prior information into inverse problems. While numerous deep learning methods have been proposed to tackle inverse problems, the strategic placement of regularization remains a crucial consideration. 
In this article, we introduce an innovative approach known as ``data-regularized operator learning" (DaROL) method, specifically designed to address the regularization of PDE inverse problems. 
In comparison to typical methods that impose regularization though the training of neural networks, the DaROL method trains a neural network on data that are regularized through well-established techniques including the Tikhonov variational method  and the Bayesian inference. 
Our DaROL method offers flexibility across various frameworks, and features a simplified structure that clearly delineates the processes of regularization and neural network training. In addition, we demonstrate that training a neural network on regularized data is equivalent to supervised learning for a regularized inverse mapping. Furthermore, we provide sufficient conditions for the smoothness of such a regularized inverse mapping and estimate the learning error with regard to neural network size and the number of training samples.
\end{abstract}

\section{Introduction}
Partial differential equations (PDEs) are  fundamental modeling tools in science and engineering which play a central role in scientific computing and applied mathematics. Solving a PDE entails seeking a solution given  PDE information, such as coefficient functions, boundary conditions, initial conditions, and loading sources.
Conversely, solving inverse PDE problems involves reconstructing PDE information from measurements of the PDE solutions~\cite{isakov1992stability}.
These two problems are commonly referred to as the forward and inverse problems respectively.
In an abstract way, the  forward problem can be formulated as  
\begin{equation}\label{eqn:forward}
    m = f(a) \,,
\end{equation}
where $f: \Acal \ni a \mapsto m \in \Mcal$ is the forward mapping,   $\mathcal{A}$ and $\mathcal{M}$ denote the spaces of the parameter of interest (PoI) and PDE solution measurement, respectively. The input $a$ represents the PoI, such as the initial conditions, the boundary conditions, or the parameter functions of a specific PDE. Typically, the output $m$  could be the PDE solution or (multiple) measurements of the PDE solution.
For simplicity, we assume that a fixed discretization method is employed such that both $\Acal$ and $\Mcal$ are  the subspaces within Euclidean spaces $\Rbb^{d_a}$ and $\Rbb^{d_m}$, respectively, where $d_a$ and $d_m$  denote the ambient dimensions of the spaces $\Acal$ and $\Mcal$.

In contrast to the forward problem \eqref{eqn:forward}, the inverse problem seeks to reconstruct partial knowledge of the PDE from the measurement data of the PDE solution. The inverse problem can be formulated as follows 
\begin{equation}\label{eqn:inverse}
a = f^{-1}(m)\,, \quad \text{where}\quad f^{-1}: m \mapsto a \,.
\end{equation}
The inverse problem \eqref{eqn:inverse} is generally  more challenging than the forward problem \eqref{eqn:forward}. The challenges stem from various factors, including measurement noise, the inherently ill-posed nature of the PDE model, and the inherent nonlinearity of the inverse problem. Nonlinear inverse problems are frequently cast as regression tasks across extensive datasets~\cite{chavent2010nonlinear}, necessitating numerous iterations of gradient descent at each step, thereby presenting significant computational hurdles. Several methods have been employed to expedite convergence in this iterative process ~\cite{chen2018online,askham2023random,chen2019projected} in this iterative manner.
Another notable challenge in solving inverse problems lies in the instability of the inverse map, denoted as $f^{-1}$, for certain scenarios. This instability implies that even minor noise in measurements can result in considerable errors in reconstructing the parameter $a$.  This issue is exacerbated when only partial measurements are accessible due to economic constraints, potentially leading to multiple plausible values of $a$ corresponding to the same partial measurements, rendering the inverse map $f^{-1}$ ill-posed.
To tackle the challenge of ill-posedness and achieve accurate solutions to inverse problems, it is imperative to employ suitable regularization techniques that integrate prior knowledge of the PoI. 
Two prevalent paradigms for regularization in solving inverse problems are Tikhonov regularization methods \cite{ito2014inverse} and Bayesian inference methods  \cite{biegler2011large}. Tikhonov regularization introduces a penalty term into the loss function to encourage specific characteristics of the PDE parameters, such as sparsity and sharp edges.  In contrast, Bayesian inference considers a prior probability distribution that can generate samples consistent with common understanding of the PDE parameters. Recently, the concept of applying partial regularization to the parameter space has been proposed \cite{wittmer2021data,nguyen2022dias}.

Deep learning has recently emerged as the leading approach for a diverse array of machine learning tasks \cite{graves2013speech,krizhevsky2012imagenet,brown2020language}, drawing considerable attention from the scientific computing community. This attention has led to the development of numerous deep learning methods tailored for solving partial differential equations (PDEs). These methods are highly regarded for their ability to handle nonlinearity and high-dimensional problems with flexibility ~\cite{han2018solving}. 
Common neural network (NN) approaches include the Deep Ritz method  \cite{yu2018deep}, Physics-informed neural networks (PINNs) \cite{raissi2019physics}, DeepONet~\cite{DeepONet}, and other related techniques  \cite{chen2023friedrichs,brandstetter2022message}.  
While the superiority of deep learning methods over traditional techniques (e.g., finite element methods, finite difference methods, finite volume methods) for solving the forward problem is subject to debate, deep learning methods offer several compelling advantages for tackling inverse problems. Firstly, most inverse problems exhibit inherent nonlinearity, even when their corresponding forward problems are linear. While this poses significant challenges for traditional methods, neural network approaches seamlessly handle such nonlinearities. Secondly, in inverse problems, achieving super-accurate reconstructions is often unnecessary. This aligns with the fact that deep learning methods typically contend with low accuracy due to the challenges of nonconvex optimization.

Though deep learning methods have demonstrated empirical success in tackling forward PDEs, their application to solving inverse PDEs remains an evolving area. Recent years have seen the emergence of numerous methods tailored for various applications, spanning photoacoustic tomography ~\cite{hauptmann2018model}, X-ray tomography~\cite{jin2017deep}, seismic imaging~\cite{jiang2022reinforced,ding2022coupling}, travel-time tomography~\cite{fan2023solving}, and general inverse problems
~\cite{arridge2019solving,ongie2020deep,berg2017neural,bar2019unsupervised,molinaro2023neural,ong2022integral}. 
Typical deep learning methods leverage a neural network, denoted as  $\phi_\theta$, as a versatile surrogate to approximate the inverse map $f^{-1}$ via offline training and online prediction. During offline training,  neural network weights $\theta$ are adjusted by minimizing the loss function over the training data, typically composed of input-output pairs of the target function. In the online stage, the neural network surrogate function is invoked to derive reconstructed parameters from any measurement data, effectively circumventing the need for the inverse map $f^{-1}$ and obviating the necessity for solving PDEs.
While deep learning methods have achieved success in many inverse problems~\cite{fan2023solving,molinaro2023neural,jiang2022reinforced,pu2023data,haltmeier2020regularization}, it's worth noting that a naive implementation of neural networks can fail, even for the simplest inverse problems ~\cite{maass2019deep}.
Additionally, they may suffer from poor generalization when dealing with severely ill-posed inverse problems~\cite{ong2022integral}.
This phenomenon aligns with an inherent feature of neural network approximation known as the frequency principle~\cite{xu2019frequency}, meaning that neural networks tend to fit training data with low-frequency functions. Such bias can lead to excessively long training times for solving ill-posed inverse problems. Hence, there is a growing necessity to integrate deep learning methods with prior information and regularization techniques to mitigate the challenges posed by ill-posedness.

Several methodologies have emerged to merge deep learning with prior information in tackling inverse problems. In certain studies~\cite{dittmer2020regularization,obmann2020deep,ding2022coupling,heckel2019regularizing}, regularization is accomplished by employing neural networks to parameterize the PoI, facilitating a concise representation of specific parameter characteristics such as sparsity.
Conversely, another research direction~\cite{kobler2021total,alberti2021learning,chung2017learning} concentrates on regularization via a Tikhonov penalty term during the solution of the inverse problem. In this approach, the penalty term can either be a predefined norm or one derived from data.
In summary, these deep learning methods for inverse problems apply regularization through the architecture of inverse problem solving, i.e. the PoI itself or the penalty term. Neural networks are adept at either parametrizing the PoI or the regularizer. While these methods naturally align with variational techniques for addressing inverse problems, they may be less suited for alternative frameworks like Bayesian inversion.

We propose an innovative approach that enforces regularization through the data instead of the PoI, the penalty term, or a prior distribution in the Bayesian setting. We introduce implicit regularization via training a neural network on the regularized data to approximate the inverse map. We refer to our approach as "data-regularized operator learning" (DaROL) methods.

(1) Flexibility and Applicability: DaROL offers versatility, applicable across various frameworks. It utilizes traditional regularized inverse problem solvers to generate regularized data, including Tikhonov regularization and Bayesian inference methods. Once data is generated, neural networks or other deep learning models like Generative Adversarial Networks (GANs), Variational AutoEncoders (VAEs), or Denoising Diffusion Probabilistic Models can be trained. In contrast, methods employing regularization during training may encounter computational limitations, especially when differentiating neural networks to parametrize the PoI or the penalty term.
(2) Simplicity and Efficiency: DaROL facilitates a straightforward and efficient online computation process. Although offline computation for data regularization may be substantial, once training data is regularized and training on regularized data is completed, solving any inverse problem becomes as simple as evaluating the trained neural network operator at the corresponding measurement data.
(3)Theoretical Analysis: DaROL methods decouple regularization in the data generation stage from NN training in the optimization stage. In contrast, other methods intertwine NN training with regularization, posing additional challenges for theoretical analysis. In our case, we can analyze DaROL and provide theoretical estimates of the learning error concerning NN size and the number of training samples.

In this paper, we conducted a comprehensive theoretical investigation into the accuracy of the DaROL methods. Our approach began with demonstrating that the training of a neural network on data subjected to regularization leads to the acquisition of a regularized inverse map denoted as $\freg$. Subsequently, we delve into an analysis of the smoothness and regularity of this map, followed by the establishment of a learning error estimate for the neural network's approximation of $\freg$. We explore two common techniques for data regularization: 1. 
For Tikhonov regularization, we focus on linear inverse problems that incorporate $l_1$ penalization, which is the well-known least absolute shrinkage and selection operator (LASSO) algorithm. 2. In the case of Bayesian inference regularization, we extend our investigation to encompass general nonlinear inverse problems. Here, we define the regularized data as the conditional mean of the posterior distribution. Furthermore, we provide the sufficient conditions for ensuring the Lipschitz continuity of  $\freg$.
Ultimately, our paper concludes with the presentation of learning error estimates for these two distinct regularization methods.

\paragraph{Main Contributions.} Our primary contributions can be succinctly summarized as follows:
\begin{itemize}
    \item \textbf{DaROL Method:} We introduce an innovative approach known as the  DaROL method, which seamlessly combines deep learning with regularization techniques to address inverse problems. In contrast to the other methods, our approach offers greater flexibility across various frameworks, accelerates the resolution of inverse problems, and simplifies the analytical process.
    \item \textbf{Diverse Regularization Techniques:} We explore two distinct data regularization techniques, encompassing both Tikhonov regularization and the Bayesian inference framework. We delve into the analysis of the smoothness of the regularized inverse map $\freg$ for both linear inverse problems (as demonstrated in Theorem \ref{thm:lasso_smooth}) and nonlinear inverse problems (as evidenced by Theorem \ref{thm:lipschitz_bayesian}).
    \item \textbf{Learning Error Analysis:} We conduct a comprehensive analysis of the approximation error and generalization error inherent in a neural network operator surrogate for the regularized inverse map (as demonstrated in Theorem \ref{thm:learning}). Additionally, we furnish explicit estimates pertaining to the learning error of the DaROL method, taking into account factors such as neural network size and the volume of training data.
\end{itemize}

\paragraph{Organization.} The remainder of this paper is structured as follows: In Section \ref{sec:prelim}, we introduce the neural network architecture and DaROL method. In Section \ref{sec:method}, we apply DaROL method to a linear inverse problem with $l_1$ regularization and to a nonlinear inverse problem under the Bayesian setting. In Section \ref{sec:learn_err}, we analyze the learning error of the DaROL method.

\paragraph{Notations.} We denote the set of positive real numbers as  $\Rbb_+$. The function $\text{sign}(x)$represents the sign of a number $x$. The support of a vector $x\in\Rbb^d$ is defined as  $\text{supp}(x) = \{i\in\{1,\ldots,d\} \ |\ x_i \neq 0\}$. The $2$-norm of a vector $x$ is denoted by $\|x\|$. For a matrix $A\in\Rbb^{m\times n}$ and a subset $I\subset\{1,\ldots,n\}$, we use $A_I$ to represent a submatrix of $A$ containing the columns of $A$ with indices in $I$.  Similarly, we define $b_I$ as a vector that consists of entries from a vector $b$ with indices in $I$.

\section{DaROL: data-regularized operator learning}\label{sec:prelim}
\subsection{Preliminaries}
In this section, we will outline the standard training process for a neural network designed to approximate a general nonlinear operator $\Phi:\Xcal \rightarrow \Ycal$  with Lipschitz continuity. We will also establish a generalization error estimate.
\begin{definition} (Lipschitz Continuous Operator)
    An operator $\Phi:\Xcal \rightarrow \Ycal$ is deemed Lipschitz continuous if there exists a constant $C>0$, such that for all  $x_1,x_2\in\Xcal$, the following inequality holds:
    \begin{equation*}
        \|\Phi(x_1)-\Phi(x_2) \| \leq C \|x_1 -x_2\|.
    \end{equation*}
    Here,  $\Xcal$ and $\Ycal$ represent compact Banach spaces embedded in Euclidean spaces  $\Rbb^{d_x}$ and $\Rbb^{d_y}$, respectively. 
    The constant $C$  is known as the Lipschitz constant of $\Phi$. 
\end{definition}
We aim to approximate a target operator $\Phi$ of Lipschitz continuity using a neural network function $f:\Rbb^{d_x}\rightarrow \Rbb$ that employs the ReLU (Rectified Linear Unit) activation function.  Our work can be generalized to other classes of continuous functions, including H\"older class and Sobolev class functions.

A ReLU neural network function with $L$ layers and width $p$  is represented by the following formula:
\begin{equation}\label{eqn:fnn}
f(x)   =  W_L\phi^{(L)}\circ \phi^{(L-1)} \circ\cdots \circ \phi^{(1)}(x) + b_L\,,\
\end{equation}
where each layer $\phi^{(i)}: \Rbb^{p_{i-1}} \rightarrow \Rbb^{p_{i}}$  consists of an affine transformation and the entrywise nonlinear ReLU activation function:
\[
\phi^{(i)}(x)= \sigma(W_i x + b_i) \,, i =1,\ldots,L \,.
\]
Here $\sigma(x) = \max\{x,0\}$ represents the ReLU activation function, and $W_i \in \Rbb^{p_{i} \times p_{i-1}}$ and $b_i \in \Rbb^{p_{i}}$ are the weight matrices and bias vectors for each layer. The ReLU function is applied pointwise to all entries of the input vector in each layer.
Assuming, without loss of generality, that   $p_i= p $ for $i=0,\cdots, L-1$ and $p_L = 1$ to match the output dimension of the neural network function $f$.
 In cases where the widths of each layer differ, we can always select $p$ as the largest width and introduce zero paddings in the weight matrices and bias vectors.
 
We define a class of neural network functions with $L$ layers and width $p$ that have uniformly bounded output:
\begin{definition}[ReLU Neural Network Function Class]
    \begin{equation}\label{eqn:FNN0}
	\begin{aligned}
		&\FNN^0(L,p,M)=\{ f: \Rbb^{p_0} \rightarrow \Rbb\ |\ f \text{  is in the form of \eqref{eqn:fnn}, and  } \max_{x}\| f(x)\| \leq M\,,\ \forall x\in \Rbb^{p_0}  \}.
	\end{aligned}
\end{equation}
\end{definition}
To approximate a nonlinear operator with $d_a$ outputs, we stack the neural network functions described in \eqref{eqn:FNN0} and define a class of neural network operators with $L$ layers and width $pd_a$ that have bounded output as follows:
\begin{definition}[ReLU Neural Network Operator Class]
    \begin{equation}\label{eqn:FNN}
	\begin{aligned}
		&\FNN(d_a,L,pd_a,M)=\{ f: \Rbb^{p_0} \rightarrow \Rbb^{d_a}\ |\ f(x) = [f_1(x),\ldots,f_{d_a}(x)]^\top\,, \  f_i \in \FNN^0(L,p,M)\,,\ \forall i=1,\ldots,d_a \}.
	\end{aligned}
\end{equation}
\end{definition}
It's worth noting that the width of $\FNN$ is always a multiple of its output dimension $d_a$ since we stack subnetworks $f_i$ with the same width. We may use the notation $\FNN^0$ and $\FNN$ without specifying their parameters when no ambiguity arises. 

We engage in supervised learning with the goal of approximating a target operator $\Phi: \Xcal \rightarrow \Ycal$ using a neural network operator $\phi_\theta$ from the class $\FNN$. Here, $\theta$ represents all the trainable parameters, including the weight matrices and bias vectors in all subnetworks $\phi_i$. 
In typical supervised learning scenarios, we assume access to a training dataset comprising $n$ (noisy) samples of the target operator $\Phi$, given by:
\begin{equation}\label{eqn:train_data_generic}
\Scal_\text{train} = \{z_i \coloneqq (x_i,y_i) \ |\ y_i = \Phi(x_i) + \vep_i\,, i =1,\ldots,n \} \,.
\end{equation}
Here $x_i$ are i.i.d (independent and identically distributed) samples of a random distribution over $\Phi$, and $\vep_i$ denotes i.i.d. noise. The training of the neural network $\phi_\theta$ involves minimizing a specific loss function over the training data $\Scal_\text{train}$:
\begin{equation}\label{eqn:train}
\theta^\ast \in \argmin_{\theta} \frac{1}{n}\sum_{i=1}^n l(z_i,\phi_\theta) \,.
\end{equation}
In this equation,  $l(z,\phi_\theta)$ denotes the loss function associated with $\phi_\theta$ applied to a single data pair $z=(x,y)$. Common examples of loss functions include the square loss, defined as  $l(z,\phi_\theta) = \| y - \phi_\theta(x)\|_2^2$. Upon completion of the training process, the trained model $\phi_{\theta^\ast}$ can be used as a surrogate for the target operator $\Phi$.

\subsection{Regularization of the training data}
Deep operator learning has proven to be highly effective in tackling the forward problem. This success can be attributed to the generally favorable regularity properties displayed by the forward PDE, leading to a well-behaved forward map, denoted as $f$.
Moreover, obtaining training data for the forward problem is straightforward. It simply involves evaluating the forward map at randomly generated points, represented as $x_i$. This process is equivalent to solving the corresponding forward PDE once, making data collection a relatively uncomplicated task.

Transitioning from forward operator learning to inverse operator learning poses unique challenges. In most cases, inverse problems are ill-posed, exhibiting lower regularity compared to their forward counterparts. Additionally, the inverse operator, denoted as $f^{-1}$, may not be well-defined when only partial measurements are available. Nevertheless, deep learning has been employed for solving inverse problems through a supervised learning approach, similar to the one used for the forward problem. 
The data generation process for the inverse problem training is typically identical to that of the forward operator $f$ for practicality and cost-effectiveness. This process involves generating a random function, denoted as $a$, and evaluating the forward map $f(a)$, which is equivalent to solving a  PDE once. Subsequently, a training data pair is created by introducing noise to the measurement, resulting in the following training dataset:
\begin{equation}\label{eqn:train_data_imp}
    \Scal_\text{imp} = \{z_i \coloneqq (m_i,a_i) \ |\ m_i = f(a_i) + \vep_i\,, i =1,\ldots,n \} \,.
\end{equation}
This dataset is referred to as ``implicit" training data because the output $a_i$ of the target map $f^{-1}$ is not directly obtained by evaluating $f^{-1}$; instead, it is independently generated from a random measure that depends on both the distribution of $a_i$ and the noise.
In contrast, the general supervised training dataset described in \eqref{eqn:train_data_generic} suggests that the natural approach for learning the inverse map $f^{-1}$ should consider the ``explicit" training data as follows.
\begin{equation}\label{eqn:train_data_exp}
\Scal_\text{exp} = \{z_i \coloneqq (m_i,a_i) \ |\ a_i = f^{-1}(m_i) \,, i =1,\ldots,n \} \,,
\end{equation}
where $m_i$'s are i.i.d. random variables generated from a certain probability distribution, which may depend on the noise distribution. In this context, we assume that there are no noises in $a_i$ because, in practice, the noises are present only in the measurements $m_i$. Although both training datasets, \eqref{eqn:train_data_imp} and \eqref{eqn:train_data_exp}, contain data pairs comprising measurements $m_i$ and their corresponding parameters $a_i$, the implicit setup \eqref{eqn:train_data_imp} is prevalent in the majority of simulations involving deep learning for inverse operators. This is primarily because generating a single data pair in \eqref{eqn:train_data_exp} requires direct evaluation of $f^{-1}$, which is not readily available in practice. It has been shown that training a neural network on the latter case \eqref{eqn:train_data_exp} has a data complexity that depends on the Lipschitz estimate of $f^{-1}$ ~\cite{chen2022nonparametric}, but for most inverse problem the Lipschitz estimate of $f^{-1}$ is in lack or unbounded~\cite{alessandrini1988stable} due to the ill-posedness nature of inverse problems.
The question thus remains open whether a neural network can effectively learn $f^{-1}$ from these training datasets. 

To address the ill-posedness, incorporating prior information about the PDE parameter $a$ is crucial. This information is typically integrated into the neural network training process. There are two main approaches for doing this: (1) Deep Image Prior: in this method, the image $a(x)$ is parametrized as a neural network and thus implicit regularization via such architecture is imposed to solve the inverse problems ~\cite{dittmer2020regularization,berg2021neural,haltmeier2023regularization}. (2) Tikhonov Regularization: Another approach involves adding a regularization term $\Rcal(a)$ to the training loss, which characterizes the prior information. ~\cite{ito2014inverse,nguyen2021tnet}. This regularization term guides the learning process and enforces constraints that align with the prior knowledge about the parameter $a$.
Both of these approaches have shown effectiveness in producing reasonable reconstructions. However, there is a lack of theoretical understanding about these methods, which limits further improvements in terms of accuracy and performance. 

We propose an alternative approach that combines explicit training with regularization. Specifically, we consider the following regularized training dataset:
\begin{equation}\label{eqn:train_data_reg}
\Scal_\text{reg} = \{z_i \coloneqq (m_i,\hat{a}_i) \ |\ \hat{a}_i = f^{-1}_\text{reg}(m_i) \,, i =1,\ldots,n \} \,,
\end{equation}
where $m_i=f(a_i)+\vep_i$ are measurement data, $a_i$'s are i.i.d. samples of the PoI, and $\hat{a}_i$'s are the regularized PoI.
In essence, the regularized dataset  $\Scal_\text{reg}$ has i.i.d. generated input, and the outputs are obtained through the evaluation of a "regularized inverse operator," denoted as  $f^{-1}_\text{reg}$. The advantages of training on the regularized dataset over other strategies have been summarized in Table \ref{tab:feature_comparison}. While combining regularization with inverse problem solving is the key to tackle the ill-posedness, theoretical understanding is unknown for training on explicit data with regularization via architecture or Tikhonov penalty. Our work considers an alternative regularization route via the regularized training data, and provides theoretical understanding of such strategy.

 \begin{table}[htp]
     \centering
     \begin{tabular}{c|c|c|c}
Type of training data  & Explicit data \eqref{eqn:train_data_exp} &  \begin{tabular}{@{}c@{}}Explicit data \\ with regularization\end{tabular}  & Regularized data \eqref{eqn:train_data_reg}	 \\ \hline
        nice regularity of the target map	  & No, \cite{alessandrini1988stable} & No & Yes, this work. \\ \hline
        additional computation during training & No & Yes & No\\ \hline
        theoretical understanding & Yes, \cite{gyorfi2002distribution} & No & Yes, this work\end{tabular}
     \vspace{0.2 cm}
     \caption{Comparison of various of features between different neural network training strategies}
     \label{tab:feature_comparison}
 \end{table}
 
The regularized data can be computed using standard regularization techniques, such as Tikhonov regularization~\cite{ito2014inverse} or Bayesian approaches~\cite{calvetti2018inverse}. Through the regularization of the training data, the original ill-posed target map $f^{-1}$ is replaced by a well-posed one $f^{-1}_\text{reg}$.
Compared to the explicit training dataset in $\eqref{eqn:train_data_exp}$, training on a regularized dataset is presumably easier because the regularized inverse $f^{-1}_\text{reg}$ has smaller modulus of continuity for linear inverse problems~\cite{hofmann2008modulus}. As a result, it is easier for a neural network to learn the regularized inverse operator, making this approach advantageous for ill-posed inverse problems.

\subsection{Proposed method}\label{sec:method}
In this section, we explore two common methods for generating regularized data: the Tikhonov regularization method and the Bayesian inference approach. To illustrate these methods, we will use a linear inverse problem, specifically LASSO, for Tikhonov regularization, and a nonlinear inverse problem for the Bayesian inference approach. These examples will help demonstrate the practical application of these regularization techniques in different contexts.

\subsection{Tikhonov regularized data}
\label{sec:reg_optimization}
We define the regularized inverse as follows, considering two compact metric spaces, $\Mcal$ and $\Acal$:
\begin{equation}\label{eqn:reg_inv}
    f_\text{reg}^{-1}: \Mcal \ni m\mapsto \hat{a} \in \Acal, 
\end{equation} 
where $\hat{a}$  represents the unique solution to the following optimization problem:
\begin{equation}
\label{eqn:optimization}
\hat{a} = \argmin_{a \in \Acal} \|f(a) - m\|^2 + \lambda \Rcal(a)\,.
\end{equation}
To ensure the existence and uniqueness of $\hat{a}$, we assume $\Acal$ is a convex domain and $\Rcal$ is a convex function. In addition to these assumptions, further conditions may be required for $f$ to guarantee the uniqueness of $\hat{a}$. 

In the upcoming discussion, we focus on a linear forward map, which can be expressed as $f(a) = Aa$, and a regularization term similar to LASSO. For more extensive information on different functional forms of $f$ and alternative regularization terms, such as the $l^2$ norm and the Frobenius norm,  we are referred to \cite{vaiter2015low,vaiter2017degrees}. 

The LASSO regularization problem is formulated as follows:
\begin{equation}\label{eqn:lasso}
    \hat{x} = \argmin_{x \in \Xcal} \frac{1}{2}\|Ax - b\|^2 + \lambda \|x\|_1\,.
\end{equation}
Here $\lambda>0$ is a tuning parameter. The LASSO method was initially proposed in \cite{santosa1986linear} and introduced in \cite{tibshirani1996regression}, primarily for applications in signal processing. This method played a pivotal role in the development of the field of  \textit{compressive sensing} as described in \cite{donoho2006compressed}. Its properties, including uniqueness and sensitivity concerning the tuning parameter, have been extensively studied, as evidenced in \cite{candes2005decoding,candes2006stable}.Recent research has also explored the stability of the LASSO problem in relation to the data vector $b$, as detailed in \cite{berk2022lasso}. A similar analysis can be applied to the square-root LASSO problem, as discussed in \cite{berk2023square}.

In the context of operator learning for inverse problems, the focus often shifts towards understanding the regularity of the mapping $b\mapsto \hat{x}$. It has been demonstrated that this mapping exhibits certain regularity properties. The following condition helps ensure both uniqueness and stability in this context.

\begin{assumption}[Assumption 4.4 in \cite{berk2022lasso} (non-degeneracy condition)]\label{assum:nondegeneracy}
For a solution $\hat{x}$ of \eqref{eqn:lasso}, and $I =\text{supp}(\hat{x})$, we have
\begin{itemize}
    \item[(i)] $A_I$ has full column rank $|I|$;
    \item[(ii)] $\| A_{I^C}^\top (b-A_I \hat{x}_I) \|_\infty < \lambda $.
\end{itemize}
Here $A_I$  represents the submatrix of $A$ containing columns of $A$ within the index set $I$, and $I^C$ denotes the complement column indices of $I$.
\end{assumption}
\begin{remark}
  Condition (i) in the non-degeneracy condition is essential for the reconstruction of $\hat{a}$. Condition (ii) is generally true if the noise is sufficiently small, and $\lambda$ is chosen to be large enough. 
\end{remark}

We will establish a global stability result for the regularized inverse map $b \mapsto \hat{x}$. Our proof draws on variational analysis as presented in \cite{berk2022lasso}, and it involves an SVD (Singular Value Decomposition) analysis of the submatrix $A_I$. 

\begin{lemma}[Theorem 4.13 (b) in \cite{berk2022lasso}]
\label{lemma:lasso_regularity}
    For $(b,\lambda) \in \Rbb^n \times \Rbb_+$, let $\hat{x}$ be a solution to LASSO problem \ref{eqn:lasso} with $I\coloneqq \text{supp}(\hat{x})$. If Assumption \ref{assum:nondegeneracy} holds for $\hat{x}$, then the mapping
    \[
    S \coloneqq (b,\lambda) \mapsto \hat{x} \,,
    \]
    is locally Lipschitz at the point  $(b,\lambda)$ with a Lipschitz constant
    \[
    L \leq \frac{1}{\sigma_\text{min}(A_I)^2 } \left[ \sigma_\text{max}(A_I) + \| \frac{A_I^\top \Bar{r}}{\Bar{\lambda}} \|\right]\,.
    \]
    Here $\Bar{r} \coloneqq A\hat{x} - b$  represents the residual, and $\sigma_\text{min}(A_I)$ and $\sigma_\text{max}(A_I)$ denote the smallest and largest singular values of $A_I$ respectively.  
\end{lemma}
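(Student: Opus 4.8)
The plan is to exploit the fact that, under the non-degeneracy condition, the support and sign pattern of the LASSO solution are \emph{locally constant}. This collapses the non-smooth problem \eqref{eqn:lasso} to a smooth linear system on the active coordinates, where an explicit formula for $\hat{x}$ is available and can be differentiated directly.

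First I would write the subdifferential optimality (KKT) conditions for the convex problem \eqref{eqn:lasso}. Since $\partial\|x\|_1$ consists of vectors equal to $\text{sign}(x_i)$ on $\text{supp}(x)$ and lying in $[-1,1]$ off the support, a minimizer $\hat{x}$ with $I=\text{supp}(\hat{x})$ and sign vector $s_I=\text{sign}(\hat{x}_I)$ must satisfy
\begin{equation*}
A_I^\top(A\hat{x}-b)+\lambda s_I = 0, \qquad \|A_{I^C}^\top(A\hat{x}-b)\|_\infty \le \lambda.
\end{equation*}
Writing the residual $\bar{r}=A\hat{x}-b$, the first relation reads $A_I^\top\bar{r}=-\lambda s_I$, while Assumption~\ref{assum:nondegeneracy} strengthens the second to a strict inequality and guarantees that $A_I$ has full column rank. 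Because $A\hat{x}=A_I\hat{x}_I$, full column rank makes $A_I^\top A_I$ invertible, so the active-coordinate equation solves explicitly as
\begin{equation*}
\hat{x}_I = (A_I^\top A_I)^{-1}(A_I^\top b - \lambda s_I), \qquad \hat{x}_{I^C}=0.
\end{equation*}

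The crux of the argument — and the step I expect to be the main obstacle — is showing that $I$ and $s_I$ are unchanged for $(b',\lambda')$ near $(b,\lambda)$. I would argue this by a candidate-and-verify scheme: define $\tilde{x}$ through the explicit formula above with $(b,\lambda)$ replaced by $(b',\lambda')$ but the \emph{same} $I,s_I$. By continuity of that formula, for $(b',\lambda')$ sufficiently close the entries $\tilde{x}_I$ retain their nonzero values and signs (using that $\hat{x}_I\neq 0$ coordinatewise), so $\text{supp}(\tilde{x})=I$ and $\text{sign}(\tilde{x}_I)=s_I$. Simultaneously the residual $\tilde{r}=A\tilde{x}-b'$ depends continuously on $(b',\lambda')$, so the strict inequality $\|A_{I^C}^\top\bar{r}\|_\infty<\lambda$ of Assumption~\ref{assum:nondegeneracy}(ii) persists as $\|A_{I^C}^\top\tilde{r}\|_\infty<\lambda'$. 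These two facts show $\tilde{x}$ satisfies the KKT conditions, and strict convexity on the active subspace (full column rank of $A_I$) yields uniqueness, so $\tilde{x}=S(b',\lambda')$. Hence $S$ coincides locally with the smooth map $(b',\lambda')\mapsto(A_I^\top A_I)^{-1}(A_I^\top b'-\lambda' s_I)$.

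Finally I would differentiate this local representation and bound its Jacobian. The partial derivatives are $\partial_b\hat{x}_I=(A_I^\top A_I)^{-1}A_I^\top$ and $\partial_\lambda\hat{x}_I=-(A_I^\top A_I)^{-1}s_I$, so for a perturbation $(\delta b,\delta\lambda)$,
\begin{equation*}
\|\delta\hat{x}_I\| \le \|(A_I^\top A_I)^{-1}\|\left(\|A_I^\top\|\,\|\delta b\| + \|s_I\|\,|\delta\lambda|\right).
\end{equation*}
Introducing the SVD $A_I=U\Sigma V^\top$ gives $\|(A_I^\top A_I)^{-1}\|=\sigma_{\min}(A_I)^{-2}$ and $\|A_I^\top\|=\sigma_{\max}(A_I)$, and using the optimality identity $A_I^\top\bar{r}=-\lambda s_I$ to rewrite $\|s_I\|=\|A_I^\top\bar{r}/\bar{\lambda}\|$ then produces the stated local Lipschitz constant
\begin{equation*}
L \le \frac{1}{\sigma_{\min}(A_I)^2}\left[\sigma_{\max}(A_I) + \left\|\frac{A_I^\top\bar{r}}{\bar\lambda}\right\|\right].
\end{equation*}
The delicate part throughout is the active-set stability in the third paragraph; everything else is elementary linear algebra once the problem has been reduced to the fixed support $I$. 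Note also that bounding $\|(A_I^\top A_I)^{-1}A_I^\top\|$ by the product $\|(A_I^\top A_I)^{-1}\|\,\|A_I^\top\|$ (rather than by its sharper value $\sigma_{\min}(A_I)^{-1}$) is what reproduces the exact form of the stated constant.
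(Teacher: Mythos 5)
Your proof is correct, and it is worth pointing out that the paper itself never proves this lemma: it imports it verbatim as Theorem 4.13(b) of \cite{berk2022lasso}, where the result is established with variational-analysis machinery for set-valued solution maps. Your argument is a self-contained, elementary alternative: write the KKT conditions, run a candidate-and-verify argument showing the active set and sign pattern are locally constant, collapse the problem to the explicit affine formula $\hat{x}_I=(A_I^\top A_I)^{-1}(A_I^\top b-\lambda s_I)$ on the fixed support, and differentiate. What this buys is transparency: one sees exactly where each half of the non-degeneracy assumption enters (full column rank makes $A_I^\top A_I$ invertible and the formula well-defined; strict dual feasibility makes the support stable under perturbation), and your closing observation---that bounding $\|(A_I^\top A_I)^{-1}A_I^\top\|$ by $\|(A_I^\top A_I)^{-1}\|\,\|A_I^\top\|=\sigma_{\max}(A_I)/\sigma_{\min}(A_I)^2$ rather than by the sharper value $\sigma_{\min}(A_I)^{-1}$ is what reproduces the stated constant---is exactly right, and also explains why the constant in the lemma is not tight. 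Two points to tighten. First, in the uniqueness step, strict convexity on the subspace $\{x:\ x_{I^C}=0\}$ only gives uniqueness among vectors supported on $I$; to exclude minimizers with mass outside $I$ you should invoke the standard fact that all LASSO minimizers share the same fitted value $Ax^\ast$ and hence the same residual, so the strict inequality $\|A_{I^C}^\top\tilde{r}\|_\infty<\lambda'$ forces every minimizer of the perturbed problem to vanish on $I^C$; only then does your $\tilde{x}$ coincide with $S(b',\lambda')$. Second, the degenerate case $I=\emptyset$ should be dispatched separately (there the solution is locally identically zero and the bound is trivial), and you should state explicitly that the $\bar{\lambda}$ appearing in the lemma's constant is the value of $\lambda$ at the base point, which is what your identification $\|s_I\|=\|A_I^\top\bar{r}/\bar{\lambda}\|$ via the stationarity identity $A_I^\top\bar{r}=-\lambda s_I$ uses.
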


We can use Lemma 1 to establish a global regularization result for the regularized inverse map  $b \mapsto \hat{x}$.  
\begin{theorem}\label{thm:lasso_smooth}
    Consider the regularized inverse $\freg$ defined in \eqref{eqn:optimization}. Under the following conditions:
    \begin{enumerate}
        \item[(i)] $f(a) = Aa$ with $A\in \Rbb^{d_m\times d_a}$ and $\Rcal(\cdot) = \|\cdot \|_1$.
        \item[(ii)] The non-degeneracy Assumption \ref{assum:nondegeneracy} holds for all $\hat{a}$.
    \end{enumerate}
Then the regularized map $b\mapsto \hat{x}$ is globally Lipschitz for all $b$ with the following Lipschitz constant:
\[
L \leq \frac{ \text{Cond}(A)}{\sigma_\text{min}(A)} + \frac{1}{\sigma_\text{min}^2(A)} \,.
\]
\end{theorem}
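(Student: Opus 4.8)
The strategy is to promote the \emph{pointwise} estimate of Lemma~\ref{lemma:lasso_regularity} into a \emph{uniform} one and then transfer this uniform local bound to a global Lipschitz constant. Since $\lambda$ is held fixed, restricting the joint map $(b,\lambda)\mapsto\hat x$ to the $b$-slice can only decrease its Lipschitz modulus, so the constant in Lemma~\ref{lemma:lasso_regularity} still controls $b\mapsto\hat x$ locally at every base point $(b,\lambda)$. The plan is therefore: (a) bound the data-dependent quantities $\sigma_\text{max}(A_I)$, $\sigma_\text{min}(A_I)$, and the residual term $\|A_I^\top\bar r/\bar\lambda\|$ uniformly over all admissible supports $I$, producing a single constant $L^\ast$ independent of $(b,\lambda)$; and (b) upgrade this uniform local estimate to a genuinely global one along line segments in $b$-space.

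For the singular value factors I would invoke eigenvalue interlacing. Because $A_I^\top A_I$ is a principal submatrix of $A^\top A\in\Rbb^{d_a\times d_a}$, the Cauchy interlacing theorem gives $\lambda_\text{min}(A^\top A)\le\lambda_\text{min}(A_I^\top A_I)$ and $\lambda_\text{max}(A_I^\top A_I)\le\lambda_\text{max}(A^\top A)$; taking square roots yields, for every index set $I$,
\[
\sigma_\text{min}(A_I)\ \ge\ \sigma_\text{min}(A),\qquad \sigma_\text{max}(A_I)\ \le\ \sigma_\text{max}(A).
\]
Assumption~\ref{assum:nondegeneracy}(i) (full column rank of $A_I$) ensures $\sigma_\text{min}(A_I)>0$, so the ratio $\sigma_\text{max}(A_I)/\sigma_\text{min}(A_I)^2$ is finite and dominated by $\sigma_\text{max}(A)/\sigma_\text{min}(A)^2=\text{Cond}(A)/\sigma_\text{min}(A)$, which is precisely the first term of the claimed bound.

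The residual term I would dispose of via first-order optimality. The subgradient (KKT) condition for \eqref{eqn:lasso} reads $A^\top(A\hat x-b)+\bar\lambda\,g=0$ with $g\in\partial\|\hat x\|_1$; restricting to the active set $I=\text{supp}(\hat x)$, where $g_I=\text{sign}(\hat x_I)$, gives $A_I^\top\bar r=-\bar\lambda\,\text{sign}(\hat x_I)$. Hence $\|A_I^\top\bar r/\bar\lambda\|=\|\text{sign}(\hat x_I)\|\le 1$, contributing the additive $1/\sigma_\text{min}(A_I)^2\le 1/\sigma_\text{min}^2(A)$, the second term of the bound. Substituting these two estimates into the formula of Lemma~\ref{lemma:lasso_regularity} produces the uniform local constant $L^\ast\le \text{Cond}(A)/\sigma_\text{min}(A)+1/\sigma_\text{min}^2(A)$.

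The remaining and most delicate step is (b): passing from ``locally Lipschitz with constant $\le L^\ast$ at every point'' to ``globally Lipschitz with constant $L^\ast$.'' This is not automatic, since the active support $I$ — and with it the affine piece of the solution map — changes across a locally finite arrangement of polyhedral boundaries in $b$-space, where $b\mapsto\hat x$ fails to be differentiable. I would fix arbitrary $b_1,b_2$, parametrize the segment $b(t)=(1-t)b_1+tb_2$, and use that under Assumption~\ref{assum:nondegeneracy} the LASSO solution is unique and continuous in $b$, while the segment meets only finitely many support-change breakpoints $0=t_0<t_1<\cdots<t_N=1$. On each open subinterval the support is constant and the map is affine with operator norm $\le L^\ast$, so $\|\hat x(b(t_{k+1}))-\hat x(b(t_k))\|\le L^\ast\|b_2-b_1\|(t_{k+1}-t_k)$; summing over $k$ and invoking continuity at the breakpoints telescopes to $\|\hat x(b_2)-\hat x(b_1)\|\le L^\ast\|b_2-b_1\|$. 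I expect the main obstacle to be exactly this breakpoint bookkeeping: justifying that the solution path is continuous across support changes and that any segment crosses only finitely many boundaries, which is where the non-degeneracy hypothesis (uniqueness and local support stability) does the essential work.
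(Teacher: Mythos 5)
Your proposal is correct and takes essentially the same route as the paper's proof: invoke Lemma~\ref{lemma:lasso_regularity}, bound $\sigma_\text{max}(A_I)\le\sigma_\text{max}(A)$ and $\sigma_\text{min}(A_I)\ge\sigma_\text{min}(A)$ uniformly over supports (the paper does this via its Lemma~\ref{lemma:svd_submatrix}, you via Cauchy interlacing --- the same fact), and bound the residual term $\|A_I^\top\bar r/\bar\lambda\|$ by $1$ to obtain the stated constant. If anything you are more careful than the paper, which justifies the residual bound only by citing Assumption~\ref{assum:nondegeneracy} where you give the explicit KKT identity $A_I^\top\bar r=-\bar\lambda\,\mathrm{sign}(\hat x_I)$, and which silently promotes the uniform local Lipschitz estimate to a global one where you supply the segment/breakpoint argument; the one caveat (shared by the paper's own proof) is that $\|\mathrm{sign}(\hat x_I)\|\le 1$ holds in the $\infty$-norm but equals $\sqrt{|I|}$ in the paper's stated $2$-norm convention, so strictly speaking both proofs need the lemma's residual term read in the $\infty$-norm (or the constant adjusted to $\sqrt{|I|}$).
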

\begin{proof}
    By applying Lemma \ref{lemma:lasso_regularity}, we can derive the following inequalities:
\[
L \leq \frac{\sigma_\text{max}(A_I)+1}{\sigma^2_\text{min}(A_I)} \leq \frac{ \text{Cond}(A)}{\sigma_\text{min}(A)} + \frac{1}{\sigma_\text{min}^2(A)} \,.
\]
   In the first inequality, we utilize the non-degeneracy assumption \ref{assum:nondegeneracy}, and in the last inequality, we employ Lemma \ref{lemma:svd_submatrix}. 
\end{proof}
\begin{remark}

Heuristically speaking, if we assume that the noises are small and choose a large value for $\lambda$, then the residual $\bar{r} = (b-A_I\hat{x}_I)$  is expected to be small. When the residual is small, it is more likely that the non-degeneracy conditions hold. This implies that, with sufficiently small noise, the regularized inverse operator is globally Lipschitz. In other words, the regularized inverse map exhibits a stable and well-behaved behavior for a wide range of input data, making it a robust tool for solving inverse problems.
\end{remark}

\subsection{Bayesian regularized data}
\label{sec:reg_bayesian}
The Bayesian inference method characterizes prior information about the parameter $a$ using a prior distribution denoted as $\mu_0$. The Bayesian law provides a posterior distribution $\mu^m$ of the reconstructed parameter $a$, whchi can defined as follows:
\[
\frac{d \mu^m(a)}{d \mu_0} = \frac{1}{Z(m)} \exp(-\Phi(a;m))  \,,
\]
where $Z(m)>0$ is a probability normalizing constant, and $\exp(-\Phi(a;m))$ is usually referred to as the likelihood function.

While the Bayesian inference method provides an explicit formula for the posterior distribution, it is up to the user to construct a representative solution from this distribution. The two most common choices are the Maximum a Posteriori (MAP) solution and the Conditional Mean (CM) solution, defined as follows:
\[
a_\text{MAP} = \argmax_a \mu^m(a) \,, \quad a_\text{CM} = \mathbb{E}_{\mu^m} [a]
\]
The MAP solution represents the most likely parameter given the data and prior information, while the CM solution represents the expected value of the parameter conditioned on the data and prior. 
For the simplicity of analysis and data generation, we select the CM as the regularized inverse. We define that,
\begin{equation}\label{eqn:bayesian_inv}
\hat{a} \coloneqq \mathbb{E}_{\mu^m} [a] = \frac{1}{Z(m)}\int_\Acal a \exp(-\Phi(a;m)) d\mu_0 \,.
\end{equation}
In practical implementations, to obtain an (approximated) value of $\hat{a}$, a Markov Chain Monte Carlo (MCMC) method is often used to generate a sequence that converges to a sample from the posterior distribution. The approximated value of   $\hat{a}_i = \mathbb{E}_{\mu^m} [a]$ can be computed by discarding the first half of the sequence and averaging the values in the second half. For details on the implementation of conditional mean estimation, please refer to the book~\cite{kaipio2006statistical}. This approach provides an estimate of the regularized inverse based on Bayesian inference and is commonly used in practical applications. The local Lipschitz property in Hellinger distance of posterior distributions is studied in \cite{dashti2017bayesian}. We follow a similar approach and establish in the theorem below that the regularized inverse $\freg$ defined by the conditional mean \eqref{eqn:bayesian_inv} is global Lipschitz.
\begin{theorem}\label{thm:lipschitz_bayesian}
    Consider the forward map $f: a\mapsto m$ and regularized inverse $\freg:m \mapsto \hat{a}$ with $\hat{a}$ as the conditional mean defined in \eqref{eqn:bayesian_inv}. Suppose the following assumptions hold:
    \begin{enumerate}
        \item[(i)] Both the parameter space $\Acal$ and measurement space $\Mcal$ are bounded.
        \item[(ii)] $\Phi: \Acal \times \Mcal \rightarrow \Rbb$ is continuous.
        \item[(iii)] There exists two functions $M_i: \Rbb^+ \rightarrow \Rbb\,, i =1,2$ such that for all $a\in \Acal, m,m'\in \Mcal$, we have
\begin{equation}
    \begin{aligned}
        -\Phi(a;m) & \leq M_1(\|a\|)  \\
        |\Phi(a;m) - \Phi(a;m')| &\leq M_2(\|a \|) \| m-m'\| 
    \end{aligned}
\end{equation}
\item [(iv)]$M_1$ and $M_2$ satisfies the following integrable conditions
\begin{equation}
    \begin{aligned}
        \exp(M_1(\|\cdot \|)) \in L^1_{\mu_0}(\Acal) \\
        \exp(M_1(\|\cdot\|)) M_2(\|\cdot \|) \in  L^1_{\mu_0}(\Acal)
    \end{aligned}
\end{equation}
        
 Then the regularized inverse $\freg:m\mapsto \hat{a}$ is globally Lipschitz, i.e. for all $m,m'\in \Mcal$,
        \[
        \| \freg(m) - \freg(m')\| \leq 2e^{-2R} \sup_{a\in \Acal}\|a\|  C(M_1,M_2) \|  m-m'\| \,,
        \]
      where $R=\sup_{(a,m)\in \Acal \times \Mcal } \Phi(a;m)$ and $C(M_1,M_2)$ only depends on the $L^1_{\mu_0}$ norm of  $\exp(M_1(\|\cdot \|)) $ and $\exp(M_1(\|\cdot\|)) M_2(\|\cdot \|) $.
        
    \end{enumerate}
\end{theorem}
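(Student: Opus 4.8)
The plan is to write the conditional mean as a ratio $\freg(m) = I(m)/Z(m)$, where
\begin{equation*}
I(m) = \int_\Acal a\, e^{-\Phi(a;m)}\, d\mu_0(a), \qquad Z(m) = \int_\Acal e^{-\Phi(a;m)}\, d\mu_0(a),
\end{equation*}
and to control the difference $\freg(m) - \freg(m')$ through a quotient-rule decomposition. Adding and subtracting $I(m')Z(m')$ in the numerator of the combined fraction gives
\begin{equation*}
\freg(m) - \freg(m') = \frac{\bigl(I(m)-I(m')\bigr)Z(m') + I(m')\bigl(Z(m')-Z(m)\bigr)}{Z(m)Z(m')},
\end{equation*}
so that the task reduces to three estimates: an upper bound on the increments $\|I(m)-I(m')\|$ and $|Z(m)-Z(m')|$, a lower bound on $Z$, and a crude bound on $\|I(m')\|$ (equivalently on $\|\freg(m')\|$).

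First I would bound the increments using the elementary inequality $|e^{-s}-e^{-t}| \le \max(e^{-s},e^{-t})\,|s-t|$ with $s=\Phi(a;m)$ and $t=\Phi(a;m')$. Assumption (iii) controls both factors: the Lipschitz-in-$m$ bound gives $|s-t| \le M_2(\|a\|)\,\|m-m'\|$, while $-\Phi(a;m)\le M_1(\|a\|)$ gives $\max(e^{-s},e^{-t})\le e^{M_1(\|a\|)}$. Hence the integrand of each increment is dominated by $e^{M_1(\|a\|)}M_2(\|a\|)\,\|m-m'\|$ (with an extra factor $\|a\|$ for $I$), and since $\Acal$ is bounded with $D := \sup_{a\in\Acal}\|a\|<\infty$ by assumption (i), integrating against $\mu_0$ and invoking the integrability assumption (iv) yields
\begin{equation*}
\|I(m)-I(m')\| \le D\,\|e^{M_1}M_2\|_{L^1_{\mu_0}}\,\|m-m'\|, \qquad |Z(m)-Z(m')| \le \|e^{M_1}M_2\|_{L^1_{\mu_0}}\,\|m-m'\|.
\end{equation*}

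Next I would produce the lower bound on the normalizing constant. Since $\Phi$ is continuous on the bounded, hence compact, set $\Acal\times\Mcal$, the quantity $R=\sup_{(a,m)}\Phi(a;m)$ is finite, whence $e^{-\Phi(a;m)}\ge e^{-R}$ and, $\mu_0$ being a probability measure, $Z(m)\ge e^{-R}$ for every $m$. The remaining ingredient, $\|I(m')\|\le D\,Z(m')$, follows directly from $\|a\|\le D$ and shows $\|\freg(m')\|\le D$, reflecting that the conditional mean is an average over $\Acal$. Substituting these four estimates into the decomposition, the factor $Z(m')$ cancels and I am left with a bound of the form $2D\,\|e^{M_1}M_2\|_{L^1_{\mu_0}}\,\|m-m'\|/Z(m)$, which the lower bound on $Z$ turns into the claimed global Lipschitz estimate, with constant proportional to $D=\sup_{a\in\Acal}\|a\|$, an exponential factor in $R$ arising from $1/Z$, and the $L^1_{\mu_0}$ norms collected into $C(M_1,M_2)$.

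The step I expect to be the main obstacle is the lower bound on $Z$, since it is precisely what upgrades a merely local estimate to a \emph{global} one: one must rule out the vanishing of the normalizing constant uniformly in $m$, and this is where compactness of $\Acal\times\Mcal$ together with continuity of $\Phi$ (assumptions (i)--(ii)) is essential. The increment bounds themselves are routine once the dominating function $e^{M_1}M_2$ is identified; the only subtlety is to verify that the integrability conditions (iv) legitimize passing the difference quotient inside the integral and guarantee finiteness of $C(M_1,M_2)$.
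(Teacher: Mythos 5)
Your proposal is correct and follows essentially the same route as the paper's own proof: write the conditional mean as a ratio $Y(m)/Z(m)$, apply a quotient-rule decomposition of the difference, dominate the increments of both numerator and denominator by $e^{M_1(\|a\|)}M_2(\|a\|)\,\|m-m'\|$ (your elementary exponential inequality is the paper's mean-value-theorem step), and obtain the uniform lower bound $Z(m)\ge e^{-R}$ from continuity of $\Phi$ and boundedness of $\Acal\times\Mcal$. The only difference is cosmetic: you bound $\|I(m')\|\le \sup_{a\in\Acal}\|a\|\,Z(m')$ so that $Z(m')$ cancels, which yields a marginally cleaner constant (a single factor $e^{R}$ and one $L^1_{\mu_0}$ norm, versus the paper's two-sided bounds producing $e^{2R}$ times $C(M_1,M_2)$; note the paper's stated factor $e^{-2R}$ is evidently a sign typo for $e^{2R}$).
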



\section{Learning error analysis}\label{sec:learn_err}
In both cases, where the regularized data is generated using either the regularization method discussed in Section \ref{sec:reg_optimization} or the Bayesian method described in Section \ref{sec:reg_bayesian}, a neural network denoted as $\phi_\theta$ is trained on the regularized data set $\Scal_\text{reg}$. The training is typically carried out through the optimization process detailed in \eqref{eqn:train}, resulting in a trained neural network, which we denote as $\phi_{\theta^\ast}$. This trained neural network is designed to learn and approximate the inverse operator based on the regularized data, and it can be used for making predictions and solving inverse problems.

The learning error of the trained neural network $\phi_{\theta^\ast}$ is a measure of the discrepancy between the true value $f^{-1}_\text{reg}(m)$ and the neural network's  prediction $\phi_{\theta^\ast}(m)$ for an independent data $m$. This independent data point is typically an independent copy of the $m_i$ in the $\Scal_\text{reg}$ data set. The learning error is defined as: 
\begin{equation}
    \label{eqn:learn_err}
    \begin{aligned}
    \Elearn \coloneqq \Ebra{\|  f^{-1}_\text{reg}(m) - \phi_{\theta^\ast}(m) \|^2}  \leq \Eapprox + \Egeneral \,.
    \end{aligned}
\end{equation}
Here, we decompose the learning error into two components: approximation error and generalization error, which are defined as follows: 
   $$  \Eapprox  \coloneqq \Eb{\Scal_\text{reg}} \frac{1}{n}\sum_{i=1}^n \| f^{-1}_\text{reg}(m_i) - \phi_{\theta^\ast}(m_i) \|^2, $$
   $$ \Egeneral   \coloneqq \Eb{\Sreg}\Eb{m} \left[\|f^{-1}_\text{reg}(m) - \phi_{\theta^\ast}(m) \|^2 - \frac{1}{n}\sum_{i=1}^n \| f^{-1}_\text{reg}(m_i) - \phi_{\theta^\ast}(m_i) \|^2 \right]. $$
The approximation error measures the Mean Square Error (MSE) between the target function $\freg$ and the function class $\FNN(p, L, M)$ represented by the trained neural network on the training data set. This component characterizes the approximation power of the hypothesis space of neural network operators, and it is primarily controlled by factors such as the neural network width ($p$), depth ($L$), and the choice of activation functions.
The generalization error, on the other hand, is mainly controlled by the number of training samples ($n$). It represents the ability of the trained neural network to generalize its learned knowledge to independent data points. Generalization error reflects how well the neural network can make accurate predictions on data it hasn't seen during training. 

\subsection{Approximation error}
The approximation error is a well-researched topic in the field of deep learning, and there have been several significant contributions to understanding the capabilities of neural networks to approximate continuous functions \cite{shen2019deep,shen2021deep,yarotsky2018optimal}. For simplicity, we can consider a neural network class based on the results presented in \cite{shen2019deep}  that consists of neural network functions with $L$ layers, length $p$, and activated by the ReLU function. The key insight from \cite{shen2019deep} is that such a neural network class has the capacity to approximate any continuous functions. This result provides a foundation for understanding the approximation power of neural networks in the context of our analysis.
\begin{theorem}[Theorem 1.1 of \cite{shen2019deep}]\label{thm:shen}
    Given any $p,L\in \mathbb{N}^+$, and an arbitrary continuous function $f$ defined on $[0,1]^d$, there exists a function $\phi$ implemented by a ReLU network with width $12\max\{d\lfloor p^{1/d}\rfloor,p+1\}$ and depth $12L + 14 + 2d$ such that
    \[
    \sup_{x\in [0,1]^d} | f(x) - \phi(x) | \leq 19 \sqrt{d} w_f(p^{-2/d}L^{-2/d}) \,,
    \]
    where $w_f(\cdot)$  represents the modulus of continuity function of $f$.
\end{theorem}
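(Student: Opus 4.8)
The plan is to prove this by the standard \emph{discretize--index--assign} construction for deep ReLU approximation, carefully tracking the budget so that width and depth each contribute \emph{quadratically} to the effective resolution. The target accuracy $w_f(p^{-2/d}L^{-2/d})$ corresponds to approximating $f$ by a function that is constant on a uniform partition of $[0,1]^d$ into cubes of side length $s\approx(pL)^{-2/d}$; there are then $K\approx(pL)^{2/d}$ cubes along each coordinate and $K^d\approx p^2L^2$ cubes in total. On each cube $Q_\beta$ I would fix a representative corner $x_\beta$ and assign the value $f(x_\beta)$, so the piecewise-constant approximant incurs error at most $w_f(\mathrm{diam}\,Q_\beta)=w_f(\sqrt d\, s)$; subadditivity of the modulus of continuity, $w_f(\sqrt d\, s)\le \sqrt d\, w_f(s)$, together with the constants accrued below, produces the factor $19\sqrt d$.

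The construction writes $\phi=g\circ\psi$, assembled from two ReLU subnetworks whose sizes fit the two branches of the width bound. First, a coordinate-quantization subnetwork $\psi:[0,1]^d\to\{0,\ldots,K-1\}^d$ that computes, in parallel for each coordinate, the monotone step function $x_i\mapsto\lfloor K x_i\rfloor$: width $O(p^{1/d})$ and depth $O(L)$ suffice to realize a step function with the required $K\approx(pL)^{2/d}$ levels (Shen--Yang--Zhang achieve on the order of (width)$^2$(depth)$^2$ levels, the engine being composition through depth), and running $d$ such blocks in parallel accounts for the width contribution $d\lfloor p^{1/d}\rfloor$ and the $+2d$ in the depth. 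Second, a value-assignment subnetwork $g$ that maps the combined base-$K$ integer cube index to the stored value $f(x_\beta)$; here the bit-extraction / point-fitting technique of Yarotsky and of Shen--Yang--Zhang lets a network of width $O(p)$ and depth $O(L)$ interpolate the required $K^d\approx p^2L^2$ arbitrary values at integer points, which is exactly where the $p+1$ branch of the width and the linear-in-$L$ depth are consumed. Composing the two subnetworks and taking the larger width yields width $12\max\{d\lfloor p^{1/d}\rfloor,p+1\}$ and depth $12L+14+2d$.

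I anticipate two obstacles. The principal one is the point-fitting step: storing $\sim p^2L^2$ independent values with only $O(p)$ neurons per layer and $O(L)$ layers is impossible with a width-only or shallow network and genuinely requires the bit-extraction construction, in which binary digits of the index are peeled off and recombined through depth; verifying the exact width and depth constants of this block is the most delicate accounting in the proof. The second obstacle is upgrading the estimate from ``holds off a thin trifling region near the cell boundaries'' to the uniform bound $\sup_{x\in[0,1]^d}$ in the statement, since the continuous map $\phi$ cannot reproduce the discontinuous piecewise-constant target on the slabs where $\lfloor Kx_i\rfloor$ jumps. I would resolve this with the Shen--Yang--Zhang clamping/shifting device for the quantizer, arranged so that on the boundary slabs $\phi$ interpolates only between values of \emph{spatially adjacent} cells, whose targets differ by at most $w_f(\sqrt d\, s)$; this keeps the boundary error of the same order and yields the uniform bound, with all leftover constants absorbed into the factor $19\sqrt d$.
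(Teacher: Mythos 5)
You should first note that the paper itself contains no proof of this statement: it is imported verbatim from \cite{shen2019deep} (their Theorem 1.1) and used as a black box in the proof of Theorem \ref{thm:approximation}, so your proposal can only be compared against the source's proof. Against that benchmark, your architecture is the right one and matches Shen--Yang--Zhang's: a uniform partition into $K^d \approx p^2L^2$ cells so that the target accuracy is $w_f(1/K)\approx w_f(p^{-2/d}L^{-2/d})$, a per-coordinate step-function quantizer run in parallel (accounting for the $d\lfloor p^{1/d}\rfloor$ branch of the width), and a bit-extraction point-fitting subnetwork of width $O(p)$ and depth $O(L)$ (the $p+1$ branch), composed as $\phi = g\circ\psi$. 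One small inaccuracy there: bit extraction does not interpolate $K^d$ \emph{arbitrary} real values exactly; it fits values quantized to precision on the order of $w_f(K^{-1})$, with the quantization error absorbed into the constant $19\sqrt{d}$ --- but this is cosmetic.

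The genuine gap is in your last step, upgrading the estimate from ``off the trifling region'' to the uniform bound over all of $[0,1]^d$. You propose to arrange the quantizer so that on a boundary slab the composed network ``interpolates only between values of \emph{spatially adjacent} cells.'' This cannot work as stated once $g$ is built by bit extraction: as $\psi_i(x)$ sweeps continuously from $k$ to $k+1$ across a slab in coordinate $i$, the combined base-$K$ index passes through non-integer values (and the crossing shifts the combined index by $K^{i-1}$, not by $1$), and a bit-extraction network is wildly non-monotone between integer inputs, so nothing constrains $g$ on the transition to lie between the two adjacent cell values. The device actually used in \cite{shen2019deep} is different: one builds copies of the network on partitions shifted coordinate by coordinate and takes a ReLU-computable mid/median sequentially over the $d$ coordinates; since for each coordinate at most one of the shifted partitions places $x$ in its trifling slab, the median always returns a correct value. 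This median-of-shifted-networks step is precisely where the additive $+2d$ in the depth and a dimension-dependent factor in the width constant come from (in the source's published version the width constant is $3^{d+3}$ rather than the $12$ quoted here, the $3^d$ being the footprint of the $d$ successive median stages) --- not, as your accounting has it, from running the $d$ coordinate quantizers in parallel, since parallel blocks cost width, not depth. Repairing your proposal means replacing the adjacent-cell-interpolation idea with this median construction; the rest of your budget bookkeeping then goes through.
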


\begin{remark}
    This theorem essentially states that for any continuous function defined on a hypercube in $d$-dimensional space, there exists a neural network with specific characteristics that can approximate this function within a bounded error, with the error depending on the modulus of continuity of the function, as well as the network's width and depth. This result highlights the universal approximation capabilities of neural networks with ReLU activations.
\end{remark}

Applying Theorem \ref{thm:shen} to the regularized inverse $\freg:\Mcal \rightarrow\Acal$, we can conclude the following:
\begin{theorem}\label{thm:approximation}
    Consider a $L_f$-Lipschitz map $\freg:\Mcal \rightarrow \Acal$, where $\Mcal $ is a compact subset of $\Rbb^{d_m}$ and $\Acal$ is a compact subset of $\Rbb^{d_a}$. There exists a ReLU neural network $\phi:\Mcal \rightarrow \Rbb^{d_a}$ from the neural network class $\FNN$, defined in \eqref{eqn:FNN}, with width $12d_a\max\{d_m\lfloor p^{1/d_m}\rfloor,p+1\}$ and depth $12L + 14 + 2d_m$, 
    such that
    \[
    \sup_{m \in \Mcal} \| \freg(m) - \phi(m) \|_2 \leq 19 \sqrt{d_m d_a} L_f r_\Mcal p^{-2/d_m}L^{-2/d_m} \,,
    \]
    where $r_\Mcal$ is the radius of $\Mcal$.
\end{theorem}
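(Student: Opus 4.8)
The plan is to reduce the vector-valued, Lipschitz approximation problem to $d_a$ scalar problems on the unit cube, each handled by Theorem~\ref{thm:shen}, and then reassemble the pieces inside the operator class $\FNN$. First I would write $\freg = (f_1,\dots,f_{d_a})^\top$ with each component $f_i:\Mcal\to\Rbb$. Since $\freg$ is $L_f$-Lipschitz in the Euclidean norm, each coordinate projection is $1$-Lipschitz, so every $f_i$ is itself $L_f$-Lipschitz, and consequently its modulus of continuity obeys $w_{f_i}(t)\le L_f\,t$ for all $t\ge 0$. This linear bound on $w_{f_i}$ is exactly what converts the generic estimate of Theorem~\ref{thm:shen} into an explicit rate in $p$ and $L$.

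Next I would move the domain from the compact set $\Mcal$ to the reference cube $[0,1]^{d_m}$ required by Theorem~\ref{thm:shen}. Since $\Mcal$ is compact with radius $r_\Mcal$, there is an affine bijection $T:[0,1]^{d_m}\to Q$ onto an axis-aligned cube $Q\supseteq\Mcal$ whose linear part has operator norm proportional to $r_\Mcal$; the composite $g_i\coloneqq f_i\circ T$ is then $(r_\Mcal L_f)$-Lipschitz, so $w_{g_i}(t)\le r_\Mcal L_f\,t$. Applying Theorem~\ref{thm:shen} with $d=d_m$ to each $g_i$ yields a ReLU network $\psi_i$ of width $12\max\{d_m\lfloor p^{1/d_m}\rfloor,p+1\}$ and depth $12L+14+2d_m$ with $\sup_{u\in[0,1]^{d_m}}|g_i(u)-\psi_i(u)|\le 19\sqrt{d_m}\,r_\Mcal L_f\,p^{-2/d_m}L^{-2/d_m}$. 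Setting $\phi_i\coloneqq\psi_i\circ T^{-1}$ transfers this bound to $\Mcal$, and because $T^{-1}$ is affine it can be folded into the first affine layer of $\psi_i$ without changing the width or depth. Stacking the $d_a$ subnetworks $\phi_i$ produces $\phi=(\phi_1,\dots,\phi_{d_a})^\top\in\FNN$ of width $12d_a\max\{d_m\lfloor p^{1/d_m}\rfloor,p+1\}$ and the same depth $12L+14+2d_m$, matching the stated dimensions exactly. Finally, aggregating the per-coordinate sup-norm errors through the Euclidean norm gives $\sup_{m\in\Mcal}\|\freg(m)-\phi(m)\|_2\le\sqrt{d_a}\,\max_i\sup_m|f_i(m)-\phi_i(m)|\le 19\sqrt{d_m d_a}\,L_f r_\Mcal\,p^{-2/d_m}L^{-2/d_m}$, which is the claimed bound; the factor $\sqrt{d_a}$ is precisely what upgrades the scalar constant $19\sqrt{d_m}$ to $19\sqrt{d_m d_a}$.

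The steps above are mostly bookkeeping once the reduction is set up, and I expect the only genuinely delicate points to be the affine-rescaling accounting and the class-membership requirement. The hardest part is tracking how the rescaling $T$ propagates through the modulus of continuity so that the scale of $\Mcal$ enters as the single factor $r_\Mcal$ (and not, say, as a diameter with a spurious factor of two), which hinges on the precise convention for the norm defining $w_{f_i}$ in Theorem~\ref{thm:shen}. A secondary technical obligation is verifying the uniform output bound demanded by the definition of $\FNN^0(L,p,M)$: since $\freg$ takes values in the compact set $\Acal$, its components are bounded, so one can take $M$ at least $\sup_{a\in\Acal}\|a\|$ plus the approximation error (or insert a final clipping layer), ensuring $\phi$ genuinely lies in $\FNN$. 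Neither issue changes the stated width, depth, or error rate.
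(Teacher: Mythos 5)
Your proposal is correct and follows essentially the same route as the paper's own proof: rescale the domain so the scale enters as the factor $r_\Mcal$, apply Theorem~\ref{thm:shen} componentwise to the $d_a$ scalar coordinates (each inheriting the Lipschitz constant $L_f$, hence a linear modulus of continuity), stack the resulting subnetworks, and aggregate the sup-norm errors through the Euclidean norm to pick up the $\sqrt{d_a}$ factor. If anything, you are more careful than the paper on the two points you flag (the affine map onto a genuine cube rather than a ball-rescaling, and the output bound $M$ needed for membership in $\FNN$), neither of which changes the stated width, depth, or rate.
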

\begin{remark}
    In this context, the neural network operator $\FNN$ is constructed by stacking multiple neural network functions from $\FNN^0$, all with the same depth and width. The approximation error  of the neural network operator is similar to that of the individual neural network functions.
\end{remark}

Exactly, the theorem provides a concrete bound on the approximation error in terms of the key factors involved. The approximation error $\Eapprox$ is limited by the neural network size (width and depth), input and output dimensions, the Lipschitz constant $L_f$, and the radius of the input space $\Mcal$. The bound is given by:
\[
\Eapprox = \Eb{\Scal_\text{reg}} \frac{1}{n}\sum_{i=1}^n \| f^{-1}_\text{reg}(m_i) - \phi_{\theta^\ast}(m_i) \|^2 \leq 361 d_m d_a L_f^2 r_\Mcal^2 p^{-4/d_m}L^{-4/d_m} \,.
\]
This expression quantifies the relationship between these factors and the accuracy of the approximation when using a neural network to represent the regularized inverse operator. 

\subsection{Generalization error}
Denote that $\hat{a} = \freg(m)$ and  $\hat{a}_i = \freg(m_i)$.  We define the square loss of a function $f$ over a data pair  $z=(m,\hat{a})$ as $l(z,f) = \| \hat{a} - f(m)\|^2$. Additionally, we introduce the testing loss $\Rcal(\phit)$ and the empirical loss as follows:
\begin{equation*}
    \Rcal(\phit) \coloneqq \Eb{m}  l(z,\phit)\,,\quad \hat{\Rcal}(\phit)\coloneqq \frac{1}{n}\sum_{i=1}^n 
l(z_i,\phit).
\end{equation*}
With these definitions, we can express the generalization error as:
\begin{equation*}
    \begin{aligned}
\Egeneral &\coloneqq \Eb{\Sreg}\Eb{m} \left[ l(z,\phit) - \frac{1}{n}\sum_{i=1}^n 
l(z_i,\phit) \right] \,, \\
&= \Eb{\Sreg} \left[\Rcal(\phit) - \hat{\Rcal}(\phit) \right]\\
    \end{aligned}
\end{equation*}
\begin{theorem}\label{thm:generalization}
   Consider the training dataset $\Scal_\text{reg}$ as given in \eqref{eqn:train_data_reg}. We make the following assumptions:
  \begin{enumerate}
        \item[(i)] Each data point $m_i$ is independently generated by a random measure $p_m$ over the input space $\Mcal$.
        \item[(ii)] The neural network class $\FNN$ is composed of vector-valued functions $\phi$ with mappings from $\Mcal$ to $\Rbb^{d_a}$. Specifically, $\FNN$ can be expressed as 
        \[
        \FNN = \{\phi:\Mcal \rightarrow \Rbb^{d_a} \ |\ \phi = (\phi_1,\ldots,\phi_{d_a})\,,\quad \phi_i \in \FNN^0\}
        \]
        \item[(iii)] The sub-network $\FNN^0$ consists of neural network functions with a width of $p$, a depth of $L$, and weight matrices that have a uniform bounded Frobenius norm of $M_F$. Additionally, the output of these networks is bounded by $r_{\FNN^0}$.
    \end{enumerate}
   The generalization error $\Egeneral$ can be bounded as follows:
    \begin{equation}
      \begin{aligned}
    \Eb{\Sreg}\left[ \Rcal(\phit) - \hat{\Rcal}(\phit) \right]& \leq 8\sqrt{2\ln 2} d_a (r_\Acal + r_{\FNN^0}) \frac{r_\Mcal \sqrt{ L } M_F^L }{\sqrt{n}} \,.
      \end{aligned}  
    \end{equation}
\end{theorem}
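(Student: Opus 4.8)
The plan is to control the generalization gap uniformly over the hypothesis class $\FNN$ by a standard Rademacher complexity argument, and then to invoke a size-independent Rademacher bound for deep ReLU networks to recover the stated dependence on $L$, $M_F$ and $n$. Since the trained weights $\theta^\ast$ depend on the sample $\Sreg$, I would first pass to the uniform estimate
\[
\Rcal(\phit) - \hat{\Rcal}(\phit) \leq \sup_{\phi \in \FNN}\left( \Rcal(\phi) - \hat{\Rcal}(\phi)\right),
\]
and then apply the symmetrization lemma, so that $\Eb{\Sreg}$ of the right-hand side is bounded by (twice) the empirical Rademacher complexity $\Rfrak_n$ of the loss class $\{ m \mapsto l(z,\phi) : \phi \in \FNN \}$, where $l(z,\phi) = \|\hat{a} - \phi(m)\|^2$.

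Next I would exploit the additive structure of the square loss. Writing $l(z,\phi) = \sum_{j=1}^{d_a}(\hat{a}_j - \phi_j(m))^2$ with each $\phi_j \in \FNN^0$, the loss class is contained in the Minkowski sum of $d_a$ scalar loss classes, so subadditivity of Rademacher complexity over sums reduces matters to the scalar classes $\{ m \mapsto (\hat{a}_j - \phi_j(m))^2 : \phi_j \in \FNN^0 \}$ and contributes the factor $d_a$. On the relevant domain the outputs satisfy $|\phi_j(m)| \leq r_{\FNN^0}$ and the labels satisfy $|\hat{a}_j| \leq r_\Acal$, hence $t \mapsto (\hat{a}_j - t)^2$ is Lipschitz with constant at most $2(r_\Acal + r_{\FNN^0})$ there. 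Talagrand's contraction lemma then strips off the square loss at the cost of this Lipschitz factor, leaving the Rademacher complexity of the base network class $\FNN^0$.

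The technical heart is to bound $\Rfrak_n(\FNN^0)$ for the depth-$L$ ReLU class whose weight matrices have Frobenius norm at most $M_F$ and whose inputs lie in a ball of radius $r_\Mcal$. I would run a layer-by-layer peeling argument in the style of Golowich--Rakhlin--Shamir: the positive homogeneity and $1$-Lipschitzness of ReLU let each layer's weight matrix be extracted while its contribution is controlled by $M_F$, and a Jensen together with an exponential-moment step keeps the depth dependence at $\sqrt{L}$ rather than exponential, yielding a bound of the form $\sqrt{2\ln 2}\, r_\Mcal \sqrt{L}\, M_F^{L}/\sqrt{n}$ (this is where the constant $\sqrt{2\ln 2}$ originates). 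Collecting the numerical constants---the factor from symmetrization, the Lipschitz constant $2(r_\Acal+r_{\FNN^0})$ of the squared loss, the contraction constant, and the coordinate-decomposition factor $d_a$---combines to produce the claimed prefactor $8\sqrt{2\ln 2}\, d_a (r_\Acal + r_{\FNN^0})$.

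The main obstacle is precisely this last step: obtaining a Rademacher bound for deep ReLU networks that is polynomial (indeed $\sqrt{L}$) in depth, free of the width $p$, and with no sample-size-independent exponential blow-up. A naive induction over layers produces a factor exponential in $L$; avoiding it requires the careful exponential-moment control inside the peeling argument. By comparison, the symmetrization and contraction steps are routine, so I expect essentially all of the difficulty to be concentrated in the network-complexity estimate.
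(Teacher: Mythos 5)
Your proposal is correct and its skeleton matches the paper's proof: uniform deviation over $\FNN$, symmetrization to the Rademacher complexity of the squared-loss class, a contraction step contributing the factor $d_a\cdot 2(r_\Acal+r_{\FNN^0})$, and the size-independent bound of Golowich--Rakhlin--Shamir for $\Rfrak_n(\FNN^0)$, which is exactly where the factor $\sqrt{2\ln 2}\,r_\Mcal\sqrt{L}\,M_F^L/\sqrt{n}$ comes from (the paper simply cites that theorem rather than re-running the peeling argument, so the step you flag as the ``technical heart'' is not an obstacle in the paper's treatment). Where you genuinely differ is the vector-to-scalar reduction. The paper states and proves its own vector-valued contraction lemma (Lemma~\ref{lem:rademacher}), valid for any loss that is Lipschitz in each of its $d_a$ coordinates, and this conditioning/peeling argument occupies most of its appendix proof; you instead exploit the additive structure of the square loss, embed the loss class in the Minkowski sum of $d_a$ scalar classes, use subadditivity of Rademacher complexity, and finish with the standard scalar Talagrand contraction. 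Your route is more elementary and uses only off-the-shelf tools, at the price of being specific to additive losses, whereas the paper's lemma is more general than it needs to be. Two caveats to make your version airtight: the contraction maps $t\mapsto(\hat{a}_j-t)^2$ vary with the sample point through $\hat{a}_j=(\freg(m_i))_j$, so you must invoke the form of the contraction lemma that allows data-indexed Lipschitz functions $\varphi_i$ (standard, but it should be said explicitly); and your constant bookkeeping is consistent with the claim --- with the one-sided contraction your route actually yields the sharper prefactor $4$ rather than $8$, the paper's $8$ arising because it bounds the excess risk via $2\sup_{\phi\in\FNN}|\hat{\Rcal}(\phi)-\Rcal(\phi)|$ --- so the stated inequality holds either way.
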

Considering the same neural network class as defined in Theorem \ref{thm:approximation}, each subnetwork should have a width of  $12\max\{d_m\lfloor p^{1/d_m}\rfloor,p+1\}$ and a depth of  $12L + 14 + 2d_m$. As a consequence of Theorem \ref{thm:generalization},
the generalization error can be bounded as  follows:
\[
\Egeneral \leq c_1 d_a (r_\Acal + r_{\FNN^0}) \frac{r_\Mcal \sqrt{ \max\{L,d_m\} } M_F^{c_2 \max\{L,d_m\}} }{\sqrt{n}} \,,
\]
where $c_1,c_2>0$ are absolute constants.

Finally, we summarize the learning error for learning a Lipschitz operator in the following theorem.
\begin{theorem}\label{thm:learning}
    Consider a $L_f$-Lipschitz map $\freg:\Mcal \rightarrow \Acal$. If the following assumptions hold:
    \begin{enumerate}
        \item[(i)] $\Mcal$ is a compact subspace  of $\Rbb^{d_m}$;
        \item[(ii)] the neural network class $\FNN$ in \eqref{eqn:FNN} has weight matrices uniformly bounded by $M_F$ in Frobenius norm.
    \end{enumerate}
    Then we can obtain a neural network operator $\phit$ with the training data set \eqref{eqn:train_data_reg}, from the neural network operator class $\FNN(d_a,L,pd_a,M)$ with the following parameter choice:
    \[
    L = \Omega(d_m\tilde{L})\,,\quad p = \Omega( d_m^{\frac{d_m}{d_m-1}} \tilde{p})\,, \quad \text{and}\quad M=L_fr_\Mcal + 1 \,,
    \]
    where $\tilde{L},\tilde{p}>0$ are arbitrary constants,
    such that 
    \[
    \Elearn \leq c d_a d_m^{1/2}  r_\Mcal^2 \left( L_f^2 d_m^{\frac{d_m^2-9d_m + 4}{2d_m(d_m-1)}} \left(\tilde{p}\tilde{L}\right)^{-4/d_m} + L_f\frac{\sqrt{\tilde{L}}M_F^{\tilde{c}d_m}}{\sqrt{n}} \right) \,.
    \]
    Here $c,\tilde{c}>0$ are absolute constants. The notation $y = \Omega(x)$ means there exist $c_1, c_2>0$ such that $ c_1x \leq y \leq c_2 x$.
\end{theorem}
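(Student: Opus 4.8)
The plan is to read Theorem \ref{thm:learning} as the synthesis of the two preceding estimates: everything rests on the decomposition $\Elearn \leq \Eapprox + \Egeneral$ recorded in \eqref{eqn:learn_err}, so the proof reduces to inserting the bounds of Theorem \ref{thm:approximation} and Theorem \ref{thm:generalization} and then reconciling their two parameter conventions. The structural point to respect throughout is that a \emph{single} network must realize both bounds at once: the approximant produced by the Shen construction has to be a legitimate member of the class $\FNN(d_a,L,pd_a,M)$ on which the generalization argument of Theorem \ref{thm:generalization} is run. This is precisely what the prescribed output bound $M = L_f r_\Mcal + 1$ secures, since $\freg$ being $L_f$-Lipschitz on the radius-$r_\Mcal$ set $\Mcal$ forces each coordinate $\fregi$ to have range $\lesssim L_f r_\Mcal$, so its Shen approximant lies in $\FNN^0(L,p,M)$ and the stack lies in $\FNN$.

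First I would invoke Theorem \ref{thm:approximation} to obtain $\Eapprox \leq 361\, d_m d_a L_f^2 r_\Mcal^2\, p^{-4/d_m} L^{-4/d_m}$, while recording that the network realizing this bound has actual width $12 d_a \max\{d_m \lfloor p^{1/d_m}\rfloor, p+1\}$ and actual depth $12L + 14 + 2 d_m$. Next I would apply Theorem \ref{thm:generalization} to that \emph{same} network, taking $r_{\FNN^0} = M = L_f r_\Mcal + 1$ and using $r_\Acal \lesssim L_f r_\Mcal$ (the image of $\Mcal$ under an $L_f$-Lipschitz map has radius $\lesssim L_f r_\Mcal$), so that $(r_\Acal + r_{\FNN^0})\, r_\Mcal \lesssim L_f r_\Mcal^2$. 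The critical quantity here is the actual depth $12L + 14 + 2 d_m = \Theta(\max\{L,d_m\})$, which enters both through $\sqrt{\text{depth}}$ and, more delicately, through the depth-exponential factor $M_F^{\Theta(\max\{L,d_m\})}$; this exponential is the dominant contribution to the variance term.

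Finally I would substitute the prescribed choices $L = \Omega(d_m \tilde L)$ and $p = \Omega(d_m^{d_m/(d_m-1)} \tilde p)$ into both bounds and collect powers of $d_m$. In $\Eapprox$, the factor $p^{-4/d_m}$ contributes $d_m^{-4/(d_m-1)}\tilde p^{-4/d_m}$ and $L^{-4/d_m}$ contributes $d_m^{-4/d_m}\tilde L^{-4/d_m}$; gathering these against the leading $d_m$ and factoring out a common $d_m^{1/2}$ yields the exponent $\frac{d_m^2 - 9 d_m + 4}{2 d_m(d_m-1)}$ and produces the first summand. In $\Egeneral$, $\sqrt{\max\{L,d_m\}} = \Theta(d_m^{1/2}\tilde L^{1/2})$ supplies the factored-out $d_m^{1/2}$ together with the surviving $\sqrt{\tilde L}$, while $M_F^{\Theta(\max\{L,d_m\})}$ collapses to $M_F^{\tilde c d_m}$ once the constant $\tilde L$ is absorbed into $\tilde c$, giving the second summand $L_f \sqrt{\tilde L}\, M_F^{\tilde c d_m}/\sqrt n$. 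All problem-independent numbers ($12$, $14$, $19$, $361$, $8\sqrt{2\ln 2}$, and the $d_m$-independent pieces of the scalings) are folded into the single constant $c$.

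I expect the main obstacle to be careful bookkeeping rather than any sharp inequality. The symbols $(p,L)$ play two distinct roles — template parameters feeding the Shen construction versus the literal width and depth appearing in the Rademacher-type bound — and one must faithfully track how the actual depth $\Theta(\max\{L,d_m\})$ drives the exponential factor $M_F$. Getting the exponent of $d_m$ in the approximation summand exactly right, and verifying that the Shen approximant is simultaneously admissible for the generalization bound under the stated $M$, are the two places where the argument demands genuine care.
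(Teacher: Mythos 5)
Your proposal is correct and follows essentially the same route as the paper: bound $\Elearn$ by $\Eapprox + \Egeneral$ via \eqref{eqn:learn_err}, instantiate Theorem \ref{thm:approximation} and Theorem \ref{thm:generalization} for the same network class, substitute $L = \Omega(d_m\tilde{L})$, $p = \Omega(d_m^{d_m/(d_m-1)}\tilde{p})$, and absorb constants (the paper takes $M = r_\Acal+1$ and then uses $r_\Acal \leq L_f r_\Mcal$, which matches your $M = L_f r_\Mcal + 1$ bookkeeping). Your extra care about the distinction between the template parameters $(p,L)$ and the realized width/depth $\Theta(\max\{L,d_m\})$ is exactly the implicit step in the paper's proof, so there is no substantive difference.
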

\begin{remark}
The hyperparameters $\tilde{p}$ and $\tilde{L}$ characterize the neural network size. We can observe that the approximation error decreases polynomially with increasing neural network size, represented by $\tilde{p}\tilde{L}$. In contrast, the generalization error decreases at a square root rate as the number of training samples grows. Moreover, a larger neural network depth tends to lead to poorer generalization. The Lipschitz constant $L_f$ has a quadratic impact on the approximation error and a linear effect on the generalization error.
\end{remark}


 \section*{Acknowledgements}
K. C. and H. Y. were partially supported by the US National Science Foundation under awards DMS-2244988, DMS-2206333, and the Office of Naval Research Award N00014-23-1-2007. C. W. was partially supported by National Science Foundation under DMS-2206332.

\bibliographystyle{plainnat}
\bibliography{ref}

\section{Appendix}
\subsection{SVD of submatrices}
\begin{lemma}\label{lemma:svd_submatrix}
    For any matrix $X = \begin{bmatrix}
        A & B
    \end{bmatrix}\in \Rbb^{m \times n}$,  we have
    \begin{equation}
    \begin{aligned}
        \sigma_\text{max}(A)^2\leq \sigma_\text{max}(X)^2 \leq  \sigma_\text{max}(A)^2 +  \sigma_\text{max}(B)^2\,, \\
        \sigma_\text{min}(A)^2\geq  \sigma_\text{min}(X)^2 \geq \sigma_\text{min}(A)^2 +  \sigma_\text{min}(B)^2\,.
    \end{aligned}
    \end{equation}
    Furthermore, we have 
    \[
    \text{Cond}(A) \leq \text{Cond}(X) \leq \text{Cond}(A) + \text{Cond}(B).
    \]
\end{lemma}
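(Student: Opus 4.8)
The plan is to reduce every inequality to the extremal eigenvalues of the Gram matrices and then invoke the variational (Courant--Fischer) characterization together with Weyl's inequality. Writing $X = [A\ B]$, I would first record the identity $XX^\top = AA^\top + BB^\top$ together with the characterizations $\sigma_\text{max}(X) = \max_{\|u\|=1}\|Xu\|$ and $\sigma_\text{min}(X) = \min_{\|u\|=1}\|Xu\|$ (and likewise for $A$ and $B$). Everything then follows from how $\|Xu\|^2 = \|Au_1 + Bu_2\|^2$ behaves as $u=(u_1,u_2)$ ranges over the unit sphere.

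For the $\sigma_\text{max}$ line, the lower bound $\sigma_\text{max}(A)\le\sigma_\text{max}(X)$ comes from restricting the maximization for $X$ to test vectors $u=(u_1,0)$, for which $\|Xu\|=\|Au_1\|$; a maximum over a subset of the sphere cannot exceed the maximum over the whole sphere. For the upper bound I would expand $\|Au_1+Bu_2\| \le \sigma_\text{max}(A)\|u_1\| + \sigma_\text{max}(B)\|u_2\|$ and apply Cauchy--Schwarz to the pair $(\|u_1\|,\|u_2\|)$, which yields $\|Xu\|^2 \le (\sigma_\text{max}(A)^2+\sigma_\text{max}(B)^2)(\|u_1\|^2+\|u_2\|^2)$; maximizing over the unit sphere gives the claim. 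Equivalently this is $\lambda_{\max}(AA^\top+BB^\top)\le \lambda_{\max}(AA^\top)+\lambda_{\max}(BB^\top)$ by Weyl. The $\sigma_\text{min}$ upper bound $\sigma_\text{min}(X)\le\sigma_\text{min}(A)$ mirrors the max argument: restricting to $u=(u_1,0)$ turns the minimization for $X$ into the minimization for $A$ over a subset of the sphere, and a minimum over a smaller set is never smaller, so $\sigma_\text{min}(A)\ge\sigma_\text{min}(X)$ (this is Cauchy interlacing for the principal submatrix $A^\top A$ of $X^\top X$). The remaining lower bound $\sigma_\text{min}(X)^2 \ge \sigma_\text{min}(A)^2+\sigma_\text{min}(B)^2$ I would read off from the additive form of Weyl applied to $XX^\top=AA^\top+BB^\top$, namely $\lambda_{\min}(AA^\top+BB^\top)\ge \lambda_{\min}(AA^\top)+\lambda_{\min}(BB^\top)$.

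For the condition-number estimate, the lower bound $\text{Cond}(A)\le\text{Cond}(X)$ is immediate by dividing $\sigma_\text{max}(A)\le\sigma_\text{max}(X)$ by $\sigma_\text{min}(A)\ge\sigma_\text{min}(X)$. For the upper bound I would combine the two Weyl bounds to get $\text{Cond}(X)^2 \le (\sigma_\text{max}(A)^2+\sigma_\text{max}(B)^2)/(\sigma_\text{min}(A)^2+\sigma_\text{min}(B)^2)$, and then apply the mediant-type inequality $(\alpha^2+\beta^2)/(\gamma^2+\delta^2)\le \max\{\alpha^2/\gamma^2,\beta^2/\delta^2\} \le (\alpha/\gamma+\beta/\delta)^2$ with $\alpha=\sigma_\text{max}(A)$, $\beta=\sigma_\text{max}(B)$, $\gamma=\sigma_\text{min}(A)$, $\delta=\sigma_\text{min}(B)$, taking square roots at the end.

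The step I expect to be the main obstacle is the $\sigma_\text{min}$ lower bound and its reuse in the $\text{Cond}$ upper bound. The upper bound $\sigma_\text{min}(X)\le\sigma_\text{min}(A)$ is most naturally phrased through the column-Gram matrix $X^\top X$, whereas the Weyl-based lower bound lives on $XX^\top$; the two notions of the smallest singular value coincide only under matching full-column-rank and shape hypotheses. Moreover the cross term $2u_1^\top A^\top B u_2$ in $\|Au_1+Bu_2\|^2$ obstructs any single clean variational argument for the lower bound: when columns of $A$ and $B$ share directions the Gram matrix can degenerate. I would therefore state explicitly the rank and dimension assumptions under which both $\sigma_\text{min}$ inequalities hold at once, and verify that the submatrices $A_I$ arising in the LASSO application of Theorem~\ref{thm:lasso_smooth} satisfy them, since only $\sigma_\text{max}(A_I)\le\sigma_\text{max}(A)$ and $\sigma_\text{min}(A_I)\ge\sigma_\text{min}(A)$ are actually invoked there.
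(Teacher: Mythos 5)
Your proposal is correct and follows essentially the same route as the paper's proof: the restriction-of-test-vectors argument for $\sigma_{\max}(A)\le\sigma_{\max}(X)$ and $\sigma_{\min}(A)\ge\sigma_{\min}(X)$, superadditivity of extremal eigenvalues applied to $XX^\top=AA^\top+BB^\top$ for the two sum bounds, and an elementary scalar inequality for the condition number (the paper uses subadditivity of the square root where you use a mediant bound). The obstacle you flag in your final paragraph is real, but it is a defect of the lemma as stated rather than of your plan: the paper's own proof silently uses $\min_{\|x\|=1}\|Xx\|$ (column Gram) for the interlacing step and $\lambda_{\min}(XX^\top)$ (row Gram) for the superadditivity step, and under any single consistent definition the displayed chain $\sigma_{\min}(A)^2\ge\sigma_{\min}(X)^2\ge\sigma_{\min}(A)^2+\sigma_{\min}(B)^2$ forces $\sigma_{\min}(B)=0$, so your instinct to state explicit shape/rank hypotheses and to verify that the LASSO application only needs the one-sided bounds $\sigma_{\max}(A_I)\le\sigma_{\max}(A)$ and $\sigma_{\min}(A_I)\ge\sigma_{\min}(A)$ goes beyond what the paper does.
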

\begin{proof}
Using the triangle inequality, we have
\[
    \sigma_\text{max}(XX^\top)  = \| AA^\top + BB^\top \| \leq  \| AA^\top \|  + \|BB^\top \|.
\]
Note that 
\[
\sigma_\text{max}(A)= \max_{\|x\|=1\,, x_B = 0} \|Xx\| \leq \max_{\|x \|=1} \|Xx\| = \sigma_\text{max}(X).
\]
Similarly, we have $\sigma_\text{min}(A)\geq  \sigma_\text{min}(X)$.
\begin{equation}
    \begin{aligned}
        \sigma_\text{min}(XX^\top) &= \min_x x(AA^\top +BB^\top)x^\top  \\
        &\geq \min_x xAA^\top x^\top + \min_x xBB^\top x^\top\\ 
        &=\sigma_\text{min}(AA^\top) + \sigma_\text{min}(BB^\top).
    \end{aligned}
\end{equation}
Therefore the second result holds.

For the condition number, we have
\begin{equation}
    \begin{aligned}
        \text{Cond}(X) &= \frac{\sigma_\text{max}(X)}{\sigma_\text{min}(X)} \\
        &\leq \sqrt{\frac{ \sigma_\text{max}(A)^2 +  \sigma_\text{max}(B)^2}{\sigma_\text{min}(A)^2 +  \sigma_\text{min}(B)^2}} \\
        &\leq \sqrt{\frac{ \sigma_\text{max}(A)^2 }{\sigma_\text{min}(A)^2 +  \sigma_\text{min}(B)^2}} + \sqrt{\frac{  \sigma_\text{max}(B)^2}{\sigma_\text{min}(A)^2 +  \sigma_\text{min}(B)^2}} \\
        &\leq  \text{Cond}(A)  +  \text{Cond}(B).
    \end{aligned}
\end{equation}

\end{proof}

\subsection{Perturbation of optimization problems}
\label{sec:perturb_opt}
We begin by considering a general constrained optimization problem, which we denote as follows:
\begin{equation}\label{eqn:constraint_opt}
    \Bar{x} \in \argmin F(x,y)\tag{$P_{y}$}\,.
\end{equation}
The set of minimizers for Problem \ref{eqn:constraint_opt} is denoted as $S_y$.
To examine the regularity of Problem \ref{eqn:constraint_opt}, we focus on the system at $y=y_0$:
\begin{equation}\label{eqn:constraint_opt0}
    x_0 \in \argmin F(x,y_0)\tag{$P_{y_0}$} \,.
\end{equation}
We can view Problem \ref{eqn:constraint_opt} as a perturbation of Problem \ref{eqn:constraint_opt0}.

Subsequently, we introduce the following growth condition for the unperturbed system in \ref{eqn:constraint_opt0}:

\begin{definition}[Growth condition]
The growth condition of order $\alpha>0$ for Problem \ref{eqn:constraint_opt0} is deemed to hold in a neighborhood $N$ of the optimal set $S_{y_0}$ if a positive constant $c$ exists, satisfying:
    \[
    F(x,y_0) \geq F_0 + c \left[\text{dist}(x,S_{y_0})\right]^\alpha \,.
    \]
In this context, $F_0$ represents the optimal function value of Problem \ref{eqn:constraint_opt0}, and we also designate $x_0$ as an optimal solution for Problem \ref{eqn:constraint_opt0}.
\end{definition}

We will now employ the following lemma from \cite{bonnans1998optimization} to demonstrate the stability of the regularized inverse $f^{-1}_\text{reg}$.
\begin{lemma}[Proposition 6.1 of \cite{bonnans1998optimization}]
\label{lem:bonnans}
Suppose the following conditions are met:
\begin{enumerate}
    \item[(i)] The second order condition for the Problem \ref{eqn:constraint_opt0} holds in a neighborhood $N$ of $x_0$.
    \item[(ii)] The difference function $f(\cdot,y) - f(\cdot,y_0)$ is Lipschitz continuous on $N$ with modulus $l(y) = \mathcal{O}(\|y-y_0\|)$.
\end{enumerate}
Then for any $\vep(y)$-optimal solution $\Bar{x}(y)$ in $N$, with $\vep(y)=\mathcal{O}(\|y-y_0\|)$, it follows that
\[
\| \Bar{x}(y) - x_0 \| \leq c^{-1} l(y) + c^{-1/2}\vep^{1/2}\,.
\]
If, in addition, $f(x,y)$ is continuously differentiable in $x$ with $D_x f(x,\cdot)$ being Lipschitz on $N$, and the Lipschitz constant is independent of $x\in N$, then $\Bar{x}(y)$ is Lipschitz continuous at $y_0$.
\end{lemma}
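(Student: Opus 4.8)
The plan is to convert the three structural hypotheses---quadratic growth for the unperturbed problem, Lipschitz dependence of the objective on the perturbation parameter, and near-optimality of $\bar{x}(y)$---into a single scalar quadratic inequality for the distance $d(y) := \mathrm{dist}(\bar{x}(y), S_{y_0})$, and then solve that inequality. First I would fix a nearest minimizer $x_0 \in S_{y_0}$ to $\bar{x}(y)$, so that $\|\bar{x}(y) - x_0\| = d(y)$, and read hypothesis (i) as the growth condition of order $\alpha = 2$ (consistent with the $\vep^{1/2}$ exponent in the claimed bound). Evaluating that condition at the point $\bar{x}(y) \in N$ gives the lower estimate $c\, d(y)^2 \leq F(\bar{x}(y), y_0) - F_0$.

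Next I would estimate the right-hand side by inserting and removing the perturbed objective. Writing $F(\bar{x}(y),y_0) - F_0$ as the telescoping sum
\[
[F(\bar{x}(y),y_0) - F(\bar{x}(y),y)] + [F(\bar{x}(y),y) - F(x_0,y)] + [F(x_0,y) - F(x_0,y_0)] \,,
\]
the middle bracket is at most $\vep(y)$, because $\bar{x}(y)$ is $\vep(y)$-optimal for Problem \ref{eqn:constraint_opt} while $F(x_0,y) \geq \inf_x F(x,y)$; and the two outer brackets combine into $g_y(\bar{x}(y)) - g_y(x_0)$, where $g_y := F(\cdot,y_0) - F(\cdot,y)$ is, up to sign, the difference function of hypothesis (ii) and hence Lipschitz on $N$ with modulus $l(y)$. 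This produces the upper estimate $F(\bar{x}(y),y_0) - F_0 \leq l(y)\, d(y) + \vep(y)$.

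Combining the two estimates yields $c\, d(y)^2 - l(y)\, d(y) - \vep(y) \leq 0$, a quadratic inequality in $d(y) \geq 0$. Solving it and applying $\sqrt{u+v} \leq \sqrt{u} + \sqrt{v}$ gives $d(y) \leq c^{-1} l(y) + c^{-1/2}\vep(y)^{1/2}$, which is the asserted bound once $S_{y_0}$ is a singleton so that $d(y) = \|\bar{x}(y) - x_0\|$. The main obstacle here is organizational rather than technical: the nearest-point projection must be chosen \emph{before} the growth condition is applied, so that exactly the same quantity $d(y)$ appears on both sides of the telescoping estimate; with that alignment in place the quadratic-inequality step is routine.

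For the final sentence, substituting $l(y) = \mathcal{O}(\|y-y_0\|)$ and $\vep(y) = \mathcal{O}(\|y-y_0\|)$ controls the first term at the Lipschitz rate but leaves the second only at the H\"older rate $\mathcal{O}(\|y-y_0\|^{1/2})$. To upgrade to genuine Lipschitz continuity at $y_0$ I would use the extra $C^{1,1}$ hypothesis: when $F(\cdot,y)$ is continuously differentiable in $x$ with $D_x F(x,\cdot)$ Lipschitz uniformly in $x \in N$, the gradient mismatch obeys $\|D_x F(x,y_0) - D_x F(x,y)\| = \mathcal{O}(\|y-y_0\|)$, which together with the stationarity of $x_0$ lets one refine the perturbation estimate so that the effective near-optimality gap of the natural candidate solution scales like $\|y-y_0\|^2$ rather than $\|y-y_0\|$; then $\vep(y)^{1/2} = \mathcal{O}(\|y-y_0\|)$ and both terms become Lipschitz. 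I expect this refinement to be the genuinely delicate part of the argument.
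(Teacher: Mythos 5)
First, a framing note: the paper does not prove Lemma~\ref{lem:bonnans} at all --- it is imported verbatim as Proposition~6.1 of \cite{bonnans1998optimization} and used as a black box --- so your attempt can only be compared against the classical argument in that source. For the displayed estimate, your reconstruction is exactly that argument and it is correct: fix the nearest point $x_0\in S_{y_0}$ so $d(y)=\|\bar{x}(y)-x_0\|$, evaluate the second-order growth condition at $\bar{x}(y)\in N$ to get $c\,d(y)^2\leq F(\bar{x}(y),y_0)-F_0$, telescope through the perturbed objective so that $\vep(y)$-optimality controls the middle bracket and the Lipschitz modulus $l(y)$ of the difference function controls the outer brackets (both points lie in $N$, as needed), and solve the resulting quadratic inequality $c\,d(y)^2\leq l(y)\,d(y)+\vep(y)$ via $\sqrt{u+v}\leq\sqrt{u}+\sqrt{v}$. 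Your caveat that the conclusion is natively a bound on $\mathrm{dist}(\bar{x}(y),S_{y_0})$, becoming $\|\bar{x}(y)-x_0\|$ when $S_{y_0}$ is a singleton, also matches the source, where the estimate is stated for the distance to the optimal set.

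Your treatment of the final sentence, however, has a genuine gap, and in fact the sentence is not provable as transcribed. Your plan is to use the $C^{1,1}$ hypothesis to ``refine the perturbation estimate so that the effective near-optimality gap of the natural candidate solution scales like $\|y-y_0\|^2$,'' but $\vep(y)$ is a \emph{hypothesis} about the given selection $\bar{x}(y)$, not a quantity the smoothness of $F$ can improve: the extra assumption constrains the objective, not which near-minimizer an adversary hands you. Concretely, take $F(x,y)=c\,x^2$ with no $y$-dependence, $x_0=0$, and $\bar{x}(y)=\sqrt{\|y-y_0\|/c}$. Then $\bar{x}(y)$ is $\vep(y)$-optimal with $\vep(y)=\|y-y_0\|$, the difference function is identically zero (so $l(y)\equiv 0$), and $D_xF(x,\cdot)$ is Lipschitz in $y$ with constant $0$ independent of $x$; every hypothesis of the lemma holds, yet $\|\bar{x}(y)-x_0\|=\sqrt{\|y-y_0\|/c}$ is only H\"older-$1/2$ at $y_0$ (and saturates the first estimate, showing that bound is tight). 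So Lipschitz continuity of $\bar{x}(y)$ can only be concluded for exact optimal solutions ($\vep\equiv 0$) or under the stronger scaling $\vep(y)=\mathcal{O}(\|y-y_0\|^2)$ --- in which case it already follows from the first estimate, since then $c^{-1/2}\vep(y)^{1/2}=\mathcal{O}(\|y-y_0\|)$. The role of the differentiability hypothesis in the original proposition is essentially to \emph{supply} the modulus $l(y)=\mathcal{O}(\|y-y_0\|)$ for the difference function (mean value theorem applied to $D_xF(x,y)-D_xF(x,y_0)$), not to upgrade $\vep(y)$. Your instinct that this step was the delicate one was sound; the correct resolution is to repair the hypothesis on $\vep(y)$ in the statement rather than to seek the refinement you sketched.
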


\subsection{Proof of Theorem \ref{thm:lipschitz_bayesian}}
\begin{proof}
 We consider two different measurements $m,m'\in \Mcal$ and their regularize inverses $\hat{a}\coloneqq \freg(m)$ and $\hat{a}'\coloneqq \freg(m')$. We further denote that
    \[
\hat{a} = \frac{1}{Z(m)} \int_\Acal a \exp(-\Phi(a;m)) d\mu_0 \coloneqq \frac{Y(m)}{Z(m)}
    \]
    Then we can decompose the difference of $\hat{a}$ and $\hat{a}'$ as 
    \begin{equation}\label{eqn:lip_decomp}
        \begin{aligned}
            \|\hat{a} - \hat{a}'\| &\leq \left\| \frac{Y(m)}{Z(m)} - \frac{Y(m')}{Z(m')} \right\| \\
            & \leq \frac{1}{Z(m) Z(m')} \left( |Z(m)-Z(m')| \|Y(m)\| + Z(m)\|Y(m)-Y(m')\|\right)
        \end{aligned}
    \end{equation}
To show $\freg$ is Lipschitz, it suffices to provide a lower bound of $Z$, upper bounds and Lipschitz continuity of $Z$ and $Y$ respectively.
We first provide a lower bound of $Z(m)$.
Note that 
\[
Z(m) = \int_\Acal \exp(-\Phi(a;m)) d \mu_0
\]
By continuity of $\Phi$ and boundedness of $\Acal$ and $\Mcal$, we have an upper bound of $\Phi$
\[
\sup_{(a,m)\in \Acal \times \Mcal } \Phi(a;m) = R < \infty \,.
\]
This implies that 
\begin{equation}\label{eqn:Z_lower}
Z(m) \geq \exp(-R) \int_\Acal d\mu_0  = e^{-R}\mu_0(\Acal) = e^{-R} > 0 \,.
\end{equation}
Next we provide upper bounds of $Z$ and $Y$ respectively. The upper bound of $Z$ is a consequence of assumption on the lower bound of $\Phi$ and integrable condition on $M_1$.
\begin{equation}\label{eqn:Z_upper}
Z(m) \leq \int_\Acal \exp(M_1(\|a\|)) d\mu_0 \leq C\,.
\end{equation}
where $C= C(M_1)>0$ is a constant that only depends on $M_1$.
We can obtain an upper bound of $Y$ analagously.
\begin{equation}\label{eqn:Y_upper}
\|Y(m)\| \leq \int_\Acal \|a\| \exp(M_1(\|a\|)) d\mu_0 \leq C \sup_{a\in \Acal}\|a\| \,.
\end{equation}
We next show $Z(y)$ is Lipschitz. By Mean value theorem, we first have
\[
\begin{aligned}
|Z(m) - Z(m')| & \leq \int_\Acal  |\exp(-\Phi(a;m)) - \exp(-\Phi(a;m))| d \mu_0 \\
& \leq \int_\Acal \exp(-\lambda \Phi(a;m) - (1-\lambda)\Phi(a;m') ) |\Phi(a;m) - \Phi(a;m')| d \mu_0  \\
\end{aligned}
\]
for some $0<\lambda <1$. We can then apply assumption (iii) and obtain that
\begin{equation}\label{eqn:Z_lip}
    \begin{aligned}
        |Z(m) - Z(m')| & \leq \int_\Acal \exp(M_1(\|a\|)) |\Phi(a;m) - \Phi(a;m')| d \mu_0 \\
        & \leq \| m-m'\|  \int_\Acal \exp(M_1(\|a\|)) M_2(\|a \|) d \mu_0  \\
        & \leq C \|m-m'\|
    \end{aligned}
\end{equation}
where $C=C(M_1,M_2)>0$ is a constant that depends on $M_1$ and $M_2$. 
Similarly, we can derive the Lipschitz continuity of $Y(m)$.
\begin{equation}\label{eqn:Y_lip}
|Y(m)-Y(m')| \leq C  \sup_{a\in \Acal}\|a\| \|m-m'\| \,,
\end{equation}
where $C=C(M_1,M_2)>0$ is the same constant in \eqref{eqn:Z_lip}.
Finally, we plug \eqref{eqn:Z_lower}, \eqref{eqn:Z_upper},\eqref{eqn:Y_upper},\eqref{eqn:Z_lip} and \eqref{eqn:Y_lip} into  \eqref{eqn:lip_decomp}.
\begin{equation}
    \|\hat{a} - \hat{a}'\| \leq 2e^{-2R} \sup_{a\in \Acal}\|a\|  C(M_1,M_2)\|m-m'\|  \,. 
\end{equation}
 
\end{proof}

\subsection{Proof of Lemma \ref{lem:rademacher}}
\begin{proof}
    We denote the $\sigma$-field generated by $\sigma_1,\ldots,\sigma_i$ as $\Fcal_i$. Then, we have
    \begin{equation}\label{eqn:E0}
        \begin{aligned}
&\Ebra{\sup_{l\in G} \frac{1}{n}\sum_{i=1}^n \sigma_i l(h_1(x_i),\ldots,h_m(x_i)) } \\
            = &\frac{1}{n}\Ebb \left[\Eb{\sigma_n}\left[ \sup_{h_i \in H} u_{n-1}(h_1,\ldots,h_m) + \sigma_n l(h_1(x_n),\ldots,h_m(x_n)) \ |\ \Fcal_{n-1}\right]\right]
        \end{aligned}
    \end{equation}
where $ u_{n-1}(h_1,\ldots,h_m) = \sum_{i=1}^{n-1} \sigma_i l(h_1(x_i),\ldots,h_m(x_i)) $. Without loss of generality, let's assume that the supremum is attained at $h_i^+$ when $\sigma_n =1$ and at $h_i^-$ when $\sigma_n=-1$. Then we have
\begin{equation}
\begin{aligned}
    &u_{n-1}(h_1^s,\ldots,h_m^s) + s l(h_1^s(x_n),\ldots,h_m^s(x_n)) \\
    =&\sup_{h_i \in H} u_{n-1}(h_1,\ldots,h_m) + s l(h_1(x_n),\ldots,h_m(x_n)) \,,
\end{aligned}
\end{equation}
for $s=\{-1,1\}$.
We can show that conditioned on $\Fcal_{n-1}$. Then
\begin{equation}\label{eqn:E1}
\begin{aligned}
    & \Eb{\sigma_n}\left[ \sup_{h_i \in H} u_{n-1}(h_1,\ldots,h_m) + \sigma_n l(h_1(x_n),\ldots,h_m(x_n)) \right] \\
    =& \frac{1}{2}\left[ \sup_{h_i \in H} u_{n-1}(h_1,\ldots,h_m) +  l(h_1(x_n),\ldots,h_m(x_n)) \right]  + \\
     & \frac{1}{2}\left[ \sup_{h_i \in H} u_{n-1}(h_1,\ldots,h_m) -  l(h_1(x_n),\ldots,h_m(x_n)) \right] \\
     =&\frac{1}{2} \left[ u_{n-1}(h_1^+,\ldots,h_m^+) + l(h_1^+(x_n),\ldots,h_m^+(x_n)) \right] + \\
     &\frac{1}{2} \left[ u_{n-1}(h_1^-,\ldots,h_m^-) - l(h_1^-(x_n),\ldots,h_m^-(x_n)) \right] \\
     =& \frac{1}{2}\left[u_{n-1}(h_1^+,\ldots,h_m^+) + u_{n-1}(h_1^-,\ldots,h_m^-)\right] + \\
     &\frac{1}{2} \left[l(h_1^+(x_n),\ldots,h_m^+(x_n))-l(h_1^-(x_n),\ldots,h_m^-(x_n))\right] \,.
\end{aligned}
\end{equation}
Notice the second term in the last equality can be written as 
\begin{equation}\label{eqn:E2}
    \begin{aligned}
     &\frac{1}{2} \left[l(h_1^+(x_n),\ldots,h_m^+(x_n))-l(h_1^-(x_n),\ldots,h_m^-(x_n))\right] \\
     \leq & \frac{1}{2}L \sum_{i=1}^m s_i(h_i^+(x_n) -h_i^-(x_n) ) \,,
    \end{aligned}
\end{equation}
where $s_i = \text{sign}(h_i(x_n) -h_i^-(x_n) )$. Combining \eqref{eqn:E1} and \eqref{eqn:E2}, we have
\begin{equation}
    \begin{aligned}
        & \Eb{\sigma_n}\left[ \sup_{h_i \in H} u_{n-1}(h_1,\ldots,h_m) + \sigma_n l(h_1(x_n),\ldots,h_m(x_n)) \right] \\
        \leq &\frac{1}{2}\left[u_{n-1}(h_1^+,\ldots,h_m^+) + L\sum_{i=1}^m s_i h_i^+(x_n) \right] + \\
     &\frac{1}{2} \left[  u_{n-1}(h_1^-,\ldots,h_m^-)-L\sum_{i=1}^m s_i h_i^-(x_n)\right] \\
      \leq &\frac{1}{2}\sup_{h_i\in H}\left[u_{n-1}(h_1,\ldots,h_m) + L\sum_{i=1}^m s_i h_i(x_n) \right] + \\
     &\frac{1}{2}\sup_{h_i\in H} \left[  u_{n-1}(h_1,\ldots,h_m) -L\sum_{i=1}^m s_i h_i(x_n)\right] \\
     =& \Eb{\sigma_n}\left[  u_{n-1}(h_1,\ldots,h_m) + L\sum_{i=1}^m \sigma_{n,i} h_i(x_n) \right]
    \end{aligned}
\end{equation}
Therefore, plugging the above inequality back to \eqref{eqn:E0} yields
 \begin{equation}
        \begin{aligned}
&\Ebra{\sup_{l\in G} \frac{1}{n}\sum_{i=1}^n \sigma_i l(h_1(x_i),\ldots,h_m(x_i)) } \\
            \leq &\frac{1}{n}\Ebb \left[\Eb{\sigma_n}\left[  u_{n-1}(h_1,\ldots,h_m) + L\sum_{i=1}^m \sigma_{n,i} h_i(x_n) \ |\ \Fcal_{n-1} \right]  \right]
        \end{aligned}
    \end{equation}
Now applying the same process for all $j=n-1,n-2,\ldots,1$, we can show that
 \begin{equation}
        \begin{aligned}
&\Ebra{\sup_{l\in G} \frac{1}{n}\sum_{i=1}^n \sigma_i l(h_1(x_i),\ldots,h_m(x_i)) } \\
            \leq &\Ebb \left[ \sum_{j=1}^n \sum_{i=1}^m L\frac{1}{n}\sigma_{j,i} h_i(x_j) \right] \\
            =& L m \Ebra{\sup_{h\in H} \frac{1}{n}\sum_{i=1}^n \sigma_i h(x_i)}
        \end{aligned}
    \end{equation}
\end{proof}

\subsection{Proof of Theorem \ref{thm:approximation}}
\begin{proof}
    Let $r_\Mcal = \max_{m\in \Mcal}\|m\|_2$ and define $g: \frac{1}{r_\Mcal}\Mcal \rightarrow \Rbb^{d_a}$ as
    \[
    g(m ) = \freg(mr_\Mcal)\,, \quad \forall m \in \frac{1}{r_\Mcal}\Mcal \,.
    \]
    Then $g$ is $L_f r_\Mcal$-Lipschitz on $\frac{1}{r_\Mcal}\Mcal$. Denote that $g(m) = [g_1(m),\ldots,g_{d_a}(m)]$ then each component $g_i$ is also $L_f r_\Mcal$-Lipschitz on $\frac{1}{r_\Mcal}\Mcal \subset \Rbb^{d_m}$. Therefore, applying Theorem \ref{thm:shen} for each component $g_i$, there exists a ReLU network $\phi_i$ with a  width of $d_i = 12d_a\max\{d_m\lfloor p^{1/d_m}\rfloor,p+1\}$ and a depth of $12L + 14 + 2d_m$ such that
    \[
    \sup_{m\in \frac{1}{r_\Mcal}\Mcal} | g_i(m) - \phi_i(m) | \leq  19 \sqrt{d_m} L_f r_\Mcal p^{-2/d_m}L^{-2/d_m}  \,,\quad \forall i=1,\ldots,d_a \,.
    \]
    This is equivalent to
    \[
    \sup_{m\in \Mcal} | \fregi(m) - \phi_i(m) | \leq  19 \sqrt{d_m} L_f r_\Mcal p^{-2/d_m}L^{-2/d_m}  \,,\quad \forall i=1,\ldots,d_a \,.
    \]
    This further implies that 
    \[
    \sup_{m\in\Mcal} \|\freg(m) - \phi(m) \|_2 \leq 19 \sqrt{d_m d_a} L_f r_\Mcal p^{-2/d_m}L^{-2/d_m} \,.
    \]
    Here $\phi(m) = [\phi_1(m),\ldots,\phi_{d_a}(m)]$ represents the neural network that stacks all subnetworks $\phi_i$. Therefore $\phi$ is a neural network with  a  width of $12d_a\max\{d_m\lfloor p^{1/d_m}\rfloor,p+1\}$ and a depth of $12L + 14 + 2d_m$.
\end{proof}

\subsection{Proof of Theorem \ref{thm:generalization}}
\begin{proof}
The proof can be divided into three distinct parts. Initially, we establish that the testing error is limited by an empirical process. We subsequently demonstrate that the empirical process can be upper bounded by the Rademacher complexity of the neural network function class. Similar ideas have been used in \cite{gyorfi2002distribution}.  Finally, we provide an estimation for the Rademacher complexity. 
\paragraph{Empirical process bound}
Our initial step is to demonstrate that the generalization error is upper-bounded by the empirical process.
\begin{equation}\label{eqn:empirical_process}
\begin{aligned}
\Rcal(\phit) - \Rcal(\phi_\ast) &= \left(\Rcal(\phit) - \hat{\Rcal}(\phit)\right) + \left(\hat{\Rcal}(\phit) - \hat{\Rcal}(\phi_\ast) \right) + \left(\hat{\Rcal}(\phi_\ast) - \Rcal(\phi_\ast)\right) \\
& \leq \left(\Rcal(\phit) - \hat{\Rcal}(\phit)\right) + 0 + \left(\hat{\Rcal}(\phi_\ast) - \Rcal(\phi_\ast)\right) \\
& \leq 2 \sup_{\phi\in\FNN} |\hat{\Rcal}(\phi) - \Rcal(\phi)|,
\end{aligned}
\end{equation}
where the nequality is a result of the optimality of $\phit$ over the training data, while the second inequality stems from the definition of the supremum.
We refer to the quantity  $\hat{\Rcal}(\phi) - \Rcal(\phi)$ as the \textit{empirical process}, and it is indexed by $\phi \in\FNN$. 

\paragraph{Rademacher Complexity bound}
Our next objective is to determine an upper bound for $\Ebb\left[\sup_{\phi\in\FNN} \hat{\Rcal}(\phi) - \Rcal(\phi) \right]$. This bound can be established through the use of the Rademacher complexity, employing techniques such as symmetrization or the "ghost variable" trick. To facilitate this, we define the data set as $\Sreg = \{z_1,\ldots,z_n\}$ and introduce an independent copy denoted as $\Sreg' = \{z_1',\ldots,z_n'\}$. Note that 
\[
\Rcal(\phi) = \Ebb \left[\hat{\Rcal}(\phi,\Sreg)\right]  = \Ebb \left[ \hat{\Rcal}(\phi,\Sreg') \right] \,,
\]
where we use $\hat{\Rcal}(\phi,\Sreg)$ to represent the empirical loss of the function  $\phi$ on the training data $\Sreg$ and $\hat{\Rcal}(\phi,\Sreg')$ to denote the empirical loss on the training data $\Sreg'$.
Therefore
\begin{equation}
    \begin{aligned}
    \Ebb\left[\sup_{\phi\in\FNN} \hat{\Rcal}(\phi) - \Rcal(\phi) \right] &= \Ebb\left[\sup_{\phi\in\FNN} \hat{\Rcal}(\phi,\Sreg) -  \Ebb \left[\hat{\Rcal}(\phi,\Sreg') \right]\right] \\
    & = \Ebb\left[\sup_{\phi\in\FNN} \Ebb\left[\hat{\Rcal}(\phi,\Sreg) -  \hat{\Rcal}(\phi,\Sreg')\ |\ \Sreg \right] \right] 
    \end{aligned}
\end{equation}
where the second equality is a result of the independence of $\Sreg'$ from $\Sreg$. Subsequently, we interchange the inner expectation and the supremum, and we can apply the law of total expectation.
\begin{equation}
    \begin{aligned}
    \Ebb\left[\sup_{\phi\in\FNN} \Ebb\left[\hat{\Rcal}(\phi,\Sreg) -  \hat{\Rcal}(\phi,\Sreg')\ |\ \Sreg \right] \right]& \leq \Ebb\left[ \Ebb\left[\sup_{\phi\in\FNN}\hat{\Rcal}(\phi,\Sreg) -  \hat{\Rcal}(\phi,\Sreg')\ |\ \Sreg \right] \right] \\
    & =  \Ebb\left[\sup_{\phi\in\FNN}\hat{\Rcal}(\phi,\Sreg) -  \hat{\Rcal}(\phi,\Sreg') \right] \\
    & = \Ebb\left[\sup_{\phi\in\FNN} \frac{1}{n} \sum_{i=1}^n l(z_i,\phi)-l(z_i',\phi)\right]
    \end{aligned}
\end{equation}
Next we introduce i.i.d. Bernoulli random variables $\sigma_i$. By applying symmetry, we obtain the following:
\begin{equation}
    \begin{aligned}
    \Ebb\left[\sup_{\phi\in\FNN} \frac{1}{n} \sum_{i=1}^n l(z_i,\phi)-l(z_i',\phi)\right] &=\Ebb\left[\sup_{\phi\in\FNN} \frac{1}{n} \sum_{i=1}^n \sigma_i\left(l(z_i,\phi)-l(z_i',\phi)\right)\right] \\
&=\Ebb\left[\sup_{\phi\in\FNN} \frac{1}{n} \sum_{i=1}^n \sigma_i l(z_i,\phi)-\sigma_i l(z_i',\phi)\right] \\
& \leq \Ebb\left[\sup_{\phi\in\FNN} \frac{1}{n} \sum_{i=1}^n \sigma_i l(z_i,\phi) \right] + \Ebb\left[\sup_{\phi\in\FNN} \frac{1}{n} \sum_{i=1}^n -\sigma_i l(z_i',\phi) \right] \\
& = 2\Ebb\left[\sup_{\phi\in\FNN} \frac{1}{n} \sum_{i=1}^n \sigma_i l(z_i,\phi) \right]
    \end{aligned}
\end{equation}
where the first inequality is a property of the supremum, and the final equality arises from the symmetry of Bernoulli random variables.

IIn the preceding derivation, we derive the following upper bound for the expectation of the supremum of the empirical process:
\[
\Ebb\left[\sup_{\phi\in\FNN} \hat{\Rcal}(\phi) - \Rcal(\phi) \right] \leq 2\Ebb\left[\sup_{\phi\in\FNN} \frac{1}{n} \sum_{i=1}^n \sigma_i l(z_i,\phi) \right]
\]

We now introduce the Rademacher complexity.
\begin{definition}[Rademacher complexity]
    Let's consider a collection of functions denoted as  $\Gcal = \{g: \Dcal  \rightarrow \Rbb \}$. The Rademacher complexity over the data set can be expressed as: $\Scal_n = \{z_1,\ldots,z_n\} \in \Dcal^n$ is
    \[
    \Rfrak_n(\Gcal) = \Ebb\left[\sup_{g\in\Gcal} \frac{1}{n} \sum_{i=1}^n \sigma_i g(z_i)\right]
    \]
    where the random variables $\sigma_i$ are assumed to be  i.i.d. Bernoulli random variables.
\end{definition}
Denote $\Dcal = \Mcal\times \Acal$ and define $\Gcal = \{l(\cdot,\phi): \Dcal  \rightarrow \Rbb, \quad \phi \in\FNN \}$, we have
\begin{equation}\label{eqn:empirial_RC}
\Ebb\left[\sup_{\phi\in\FNN} \hat{\Rcal}(\phi) - \Rcal(\phi) \right] \leq 2 \Rfrak_n(\Gcal) \,.
\end{equation}

\paragraph{Rademacher Complexity of NN class}
To analyze the Rademacher complexity of the loss function $\Gcal$ indexed by the neural network function class, we treat $\Gcal$ as the image of a transformation applied to $\FNN$. In the subsequent discussion, we demonstrate that the Rademacher complexity of the loss function class can be bounded by that of the neural network class. The following result can be viewed as an extension of the Talagrand contraction lemma \cite{ledoux1991probability}. 
\begin{lemma}\label{lem:rademacher}
   Consider a function class $H=\{h: \Dcal \rightarrow \Rbb \}$  that maps a domain  $\Dcal$ to $\Rbb$. Let $l:\Rbb^m \rightarrow \Rbb$ be a function that is $L$-Lipschitz in all its coordinates. In other words, for all $x_i\in \Dcal,i=1,\ldots,m$ and $x_j\in\Dcal$ for any $1\leq j\leq n$, we have
    \[
    |l(x_1,\ldots,x_j,\ldots,x_m) - l(x_1,\ldots,x_j',\ldots,x_m)| \leq L|x_j-x_j'| \,.
    \]
    Now consider the following function class:
    \[
    G = \{l(h_1,\ldots,h_m):D \rightarrow \Rbb \ |\ h_i\in H\,, i=1,\ldots,m\}.
    \]
    For a fixed set of samples $S_n = \{z_1,\ldots,z_n\}\in \Dcal^n$, we can demonstrate the following inequality:
    \[
\Ebra{\sup_{l\in G} \frac{1}{n}\sum_{i=1}^n \sigma_i l(h_1(x_i),\ldots,h_m(x_i)) }\leq L m \Ebra{\sup_{h\in H} \frac{1}{n}\sum_{i=1}^n \sigma_i h(x_i)} \,.
    \]
\end{lemma}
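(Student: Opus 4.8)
The plan is to prove this vector-valued contraction bound by the classical peeling (induction-on-data-points) argument underlying the Ledoux--Talagrand contraction principle, generalized from a single Lipschitz coordinate to $m$ of them. I would condition on the Rademacher signs $\sigma_1,\dots,\sigma_{n-1}$, absorb them into the partial sum $u_{n-1}(h_1,\dots,h_m)=\sum_{i=1}^{n-1}\sigma_i l(h_1(x_i),\dots,h_m(x_i))$, and analyze only the expectation over the last sign $\sigma_n$. The objective is to show that at each peeling step the single sign $\sigma_n$ acting on $l$ can be traded for $m$ fresh independent signs $\sigma_{n,1},\dots,\sigma_{n,m}$ acting \emph{linearly} on the coordinates $h_i(x_n)$, each carrying a factor $L$; iterating this over all $n$ indices and then decoupling the joint supremum produces the advertised factor $Lm$.

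The crux is the single-step computation. Conditioning on $\Fcal_{n-1}$, I would write $\Eb{\sigma_n}\big[\sup_{h_i} u_{n-1}+\sigma_n l(h_1(x_n),\dots,h_m(x_n))\big]$ as the average of the $\sigma_n=+1$ value, attained at some maximizer $h^+$, and the $\sigma_n=-1$ value, attained at $h^-$. The two $u_{n-1}$ terms are harmless; the delicate piece is $\tfrac12\big[l(h^+(x_n))-l(h^-(x_n))\big]$. Here I would invoke the \emph{coordinatewise} Lipschitz hypothesis by changing one argument at a time, giving
\[
l(h_1^+(x_n),\dots,h_m^+(x_n))-l(h_1^-(x_n),\dots,h_m^-(x_n))\le L\sum_{i=1}^m s_i\big(h_i^+(x_n)-h_i^-(x_n)\big),
\]
where $s_i=\operatorname{sign}(h_i^+(x_n)-h_i^-(x_n))$. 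Substituting this back and bounding each of the two averaged brackets by its own supremum over $H^m$ recasts the quantity exactly as $\Eb{\sigma_n}\big[\sup_{h_i} u_{n-1}+L\sum_{i=1}^m \sigma_{n,i} h_i(x_n)\big]$, i.e. one Lipschitz evaluation has been linearized into $m$ independent Rademacher terms.

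With the single step in hand, I would iterate the identical argument for $j=n-1,n-2,\dots,1$, at each stage replacing $\sigma_j l(h(x_j))$ by $L\sum_{i=1}^m\sigma_{j,i}h_i(x_j)$, until all $l$-evaluations are removed. This leaves $\Ebb\big[\sup_{h_1,\dots,h_m\in H}\tfrac1n\sum_{j=1}^n\sum_{i=1}^m L\,\sigma_{j,i}h_i(x_j)\big]$. Since the $m$ functions $h_i$ range freely and independently inside the supremum while the doubly-indexed $\sigma_{j,i}$ are i.i.d., the supremum splits across $i$, yielding $L\sum_{i=1}^m \Ebb\big[\sup_{h\in H}\tfrac1n\sum_j \sigma_j h(x_j)\big]=Lm\,\Rfrak_n(H)$, which is precisely the claim.

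I expect the main obstacle to be the bookkeeping around the coordinatewise linearization: one must justify the one-coordinate-at-a-time telescoping that turns the Lipschitz inequality into $L\sum_i s_i(h_i^+-h_i^-)$, and then argue carefully that reintroducing $m$ independent signs per peeling step is legitimate while the functions remain coupled through the joint supremum, with the decoupling deferred to the very last line. A secondary technical point is the attainment of the maximizers $h^\pm$; if $H$ is not compact I would instead work with near-maximizers and pass to the limit, which does not affect the final constant.
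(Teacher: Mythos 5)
Your proposal follows essentially the same argument as the paper's own proof: the same peeling/induction over data points with conditioning on $\Fcal_{n-1}$, the same maximizer pair $h^{\pm}$ and coordinatewise Lipschitz linearization via the signs $s_i = \operatorname{sign}(h_i^+(x_n)-h_i^-(x_n))$, the same iteration over $j=n-1,\dots,1$, and the same final splitting of the joint supremum into $m$ copies of the Rademacher complexity of $H$, yielding the factor $Lm$. Your added remarks (telescoping justification of the coordinatewise bound, near-maximizers when $H$ is not compact) are refinements of steps the paper treats informally, not a different route.
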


We will begin by demonstrating that the square loss function  $l(\cdot,\phi)$ is Lipschitz in all its coordinates. Recall that
\[
l(z,\phi) =l(z,\phi_1,\ldots,\phi_{d_a}) =\| \hat{a} - \phi(m) \|^2 = \sum_{i=1}^{d_a} |\hat{a}_i -\phi_i(m)|^2 \,,
\]
where $\hat{a}_i$	 
  represents the components of $\hat{a}$ and $\phi_i:\Mcal \rightarrow \Rbb$ are subnetwork functions. Now consider functions $\phi_i\in\FNN,i=1,\ldots,d_a $ and $\phi_j' \in\FNN$, we have
\begin{equation}
\begin{aligned}
    & |l(z,\phi_1,\ldots,\phi_j,\ldots,\phi_{d_a}) - l(z,\phi_1,\ldots,\phi_j',\ldots,\phi_{d_a}) | \\
    =& \left|  |\hat{a}_j -\phi_j(m)|^2 - |\hat{a}_j -\phi_{j}'(m)|^2 \right| \\
    \leq & | \phi_j(m) - \phi_j'(m)| |2\hat{a}_j-\phi_j(m) - \phi_j'(m)| \\
    \leq & 2(r_\Acal + r_{\FNN^0})| \phi_j(m) - \phi_j'(m)|  \,,
\end{aligned}
\end{equation}
where $r_\Acal$ is the radius of the compact space $\Acal$ and $r_{\FNN^0}$ is the radius of neural network class $\FNN$ in the $\infty$-norm.
Consequently, the loss function is $2(r_\Acal + r_{\FNN^0})$-Lipschitz in its coordinates.
By applying Lemma \ref{lem:rademacher}, we can derive:
\begin{equation}\label{eqn:RCG_RCFNN}
\Rfrak_n(\Gcal) \leq 2d_a (r_\Acal + r_{\FNN^0})  \Rfrak_n (\FNN^0)\,,
\end{equation}
where $\FNN^0$ is the class of subnetworks.
Combining the empirical process bound \eqref{eqn:empirical_process}, Rademacher bound \eqref{eqn:empirial_RC}, and \eqref{eqn:RCG_RCFNN}, we have
\begin{equation}
    \Eb{\Sreg}\left[ \Rcal(\phit) - \hat{\Rcal}(\phit) \right] \leq 8 d_a (r_\Acal + r_{\FNN^0})  \Rfrak_n (\FNN^0)
\end{equation}

Next, we will employ the following theorem to bound the Rademacher complexity of $\FNN^0$.

\begin{theorem}[Theorem 1 of \cite{golowich18a}]
    Let $H$ be the class of neural network of depth $d$ over the domain $\Xcal$ using weight matrices satisfying $\|W_j\|_\text{Frob}\leq M_j\,, 1\leq j\leq d$  employing the ReLU activation function. Over the data set $\Scal_n = \{x_1,\ldots,x_n\} \in \Xcal^n$, we can bound the Rademacher complexity as follows:
    \[
    \Rfrak_n(H) \leq \frac{r_\Xcal \sqrt{2\ln 2 d } \prod_{i=1}^d M_j}{\sqrt{n}} \,,
    \]
    where $r_\Xcal$ is the radius of compact space $\Xcal$.
\end{theorem}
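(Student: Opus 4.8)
The plan is to prove the stated Rademacher-complexity bound by the exponential-moment ``peeling'' technique, whose whole purpose is to keep the dependence on the depth $d$ polynomial (here $\sqrt{d}$) rather than exponential. A naive layer-by-layer contraction would strip off one weight matrix at a time at the cost of a multiplicative constant per layer, producing a useless $2^d$ factor. The device that avoids this is to pass to a moment generating function with a free parameter $\lambda>0$ and to peel \emph{inside} the exponential, so that the per-layer constants enter additively through a single $\frac{1}{\lambda}\log(\cdot)$ taken only at the end. Writing a network in the class as $f(x)=W_d\,\rho(W_{d-1}\rho(\cdots\rho(W_1 x)))$ with $\rho$ the ReLU activation and $\sigma_i$ the Rademacher variables, the first step would be Jensen's inequality applied to the increasing convex map $z\mapsto e^{\lambda z}$, giving for every $\lambda>0$
\[
n\,\Rfrak_n(H) = \Ebb_\sigma \sup_{f\in H}\sum_{i=1}^n\sigma_i f(x_i) \le \frac{1}{\lambda}\log \Ebb_\sigma \sup_{f\in H}\exp\Big(\lambda\sum_{i=1}^n\sigma_i f(x_i)\Big).
\]

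Next I would peel layer by layer. Writing $g_j$ for the vector-valued output of layer $j$, so $g_0(x)=x$ and $f=W_d g_{d-1}$, the output layer is handled by Cauchy--Schwarz, $\sum_i\sigma_i W_d g_{d-1}(x_i)=W_d\sum_i\sigma_i g_{d-1}(x_i)\le M_d\|\sum_i\sigma_i g_{d-1}(x_i)\|$. For each of the $d-1$ hidden layers I would establish the peeling step
\[
\Ebb_\sigma\sup\exp\Big(\lambda\big\|\sum_i\sigma_i\rho(W_j g_{j-1}(x_i))\big\|\Big)\le 2\,\Ebb_\sigma\sup\exp\Big(\lambda M_j\big\|\sum_i\sigma_i g_{j-1}(x_i)\big\|\Big),
\]
which rests on three ingredients: (a) positive homogeneity of $\rho$, which collapses the supremum over $\|W_j\|_F\le M_j$ to a single neuron, $\|\sum_i\sigma_i\rho(Wg)\|\le M_j\max_{\|w\|=1}|\sum_i\sigma_i\rho(w^\top g)|$; (b) the elementary bound $e^{\lambda|z|}\le e^{\lambda z}+e^{-\lambda z}$, which together with the sign symmetry of the $\sigma_i$ removes the absolute value at the cost of the factor $2$; and (c) a contraction inequality valid inside the exponential moment that removes the $1$-Lipschitz ReLU, after which Cauchy--Schwarz on the unit vector $w$ restores $\|\sum_i\sigma_i g_{j-1}(x_i)\|$. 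Iterating over all hidden layers yields
\[
\Ebb_\sigma \sup_f\exp\Big(\lambda\sum_i\sigma_i f(x_i)\Big)\le 2^{\,d-1}\,\Ebb_\sigma\exp\Big(\lambda\,\bar M\,\big\|\sum_i\sigma_i x_i\big\|\Big),\qquad \bar M:=\prod_{j=1}^d M_j.
\]

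Finally I would bound the base term. The scalar $h(\sigma)=\|\sum_i\sigma_i x_i\|$ has bounded differences (flipping one $\sigma_j$ moves it by at most $2\|x_j\|$) and mean at most $(\sum_i\|x_i\|^2)^{1/2}=\|X\|_F$, so the bounded-difference (sub-Gaussian) estimate gives $\Ebb_\sigma e^{\lambda \bar M h}\le \exp(\lambda\bar M\|X\|_F+\tfrac12\lambda^2\bar M^2\|X\|_F^2)$. Substituting into the chain above and taking $\tfrac1\lambda\log$ turns the $2^{\,d-1}$ into an additive $\tfrac{(d-1)\log 2}{\lambda}$ and leaves $n\Rfrak_n(H)\le \frac{(d-1)\log 2}{\lambda}+\bar M\|X\|_F+\frac{\lambda}{2}\bar M^2\|X\|_F^2$; optimizing $\lambda$ by AM--GM collapses the $\lambda$-dependent terms to $\bar M\|X\|_F\sqrt{2(d-1)\log 2}$, and dividing by $n$ together with $\|X\|_F\le\sqrt n\,r_\Xcal$ produces the claimed bound $\Rfrak_n(H)\le \frac{r_\Xcal\sqrt{2\ln 2\,d}\,\prod_{j=1}^d M_j}{\sqrt n}$ (up to the harmless replacement of $d-1$ by $d$ and dropping a $+1$). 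The crux of the argument, and the step I expect to be most delicate, is ingredient (c) of the peeling lemma: the usual Ledoux--Talagrand contraction is stated for $\Ebb\sup$, and justifying that it survives inside $\Ebb\sup\exp(\lambda\,\cdot)$ — equivalently, that the ReLU can be removed without disturbing the exponential moment — is exactly what makes the depth dependence additive rather than multiplicative.
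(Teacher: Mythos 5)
This statement is imported by the paper as a black box (it is Theorem~1 of \cite{golowich18a}, cited without proof and used only inside the proof of Theorem~\ref{thm:generalization}), so there is no in-paper proof to compare against; the relevant benchmark is the original Golowich--Rakhlin--Shamir argument, and your proposal reconstructs exactly that proof: Jensen into an exponential moment with free parameter $\lambda$, per-layer peeling using positive homogeneity to collapse the Frobenius ball to a single unit neuron, $e^{\lambda|z|}\le e^{\lambda z}+e^{-\lambda z}$ plus sign symmetry to shed the absolute value at a factor $2$ per layer, the Ledoux--Talagrand contraction in its convex-increasing form (Eq.~(4.20) there, which is precisely what legitimizes removing the ReLU \emph{inside} $\exp(\lambda\,\cdot)$ --- you correctly flag this as the delicate step), a bounded-differences sub-Gaussian bound on $\|\sum_i\sigma_i x_i\|$ with mean at most $\|X\|_F$, and AM--GM optimization of $\lambda$. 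All the individual steps check out, including the bounded-difference constant $2\|x_j\|$ and the resulting $\exp(\lambda^2\|X\|_F^2/2)$ moment bound.

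One caveat on the endgame: your chain yields
\[
\Rfrak_n(H)\;\le\;\frac{r_\Xcal\left(1+\sqrt{2\ln 2\,(d-1)}\right)\prod_{j=1}^d M_j}{\sqrt n}\,,
\]
which matches the original theorem in \cite{golowich18a} (whose bound carries the additive $+1$), but is strictly \emph{weaker} than the inequality as stated in this paper, since $1+\sqrt{2\ln2\,(d-1)}>\sqrt{2\ln 2\,d}$ for every $d\ge 1$; so ``dropping a $+1$'' is not a harmless relaxation on your part --- it is a small misstatement in the paper's restatement of the cited theorem, which your (correct) proof cannot recover. The discrepancy is immaterial downstream, because the corollary following Theorem~\ref{thm:generalization} and Theorem~\ref{thm:learning} absorb such additive constants into the unspecified absolute constants $c_1,c_2$, but it would be worth stating the theorem with the $+1$ (or noting the constant is loose) rather than presenting the sharper form as proved.
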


Finally, we have that 
\begin{equation}
    \Eb{\Sreg}\left[ \Rcal(\phit) - \hat{\Rcal}(\phit) \right] \leq 8 d_a (r_\Acal + r_{\FNN^0}) \frac{r_\Mcal \sqrt{2\ln 2 L } M^L_F }{\sqrt{n}} \,.
\end{equation}
\end{proof}

\subsection{Proof of Theorem \ref{thm:learning}}

\begin{proof}
Combining  Theorem \ref{thm:approximation} and Theorem \ref{thm:generalization}, we can obtain a neural network  
$\phit$ trained from the neural network operator class $\FNN(d_a,L,pd_a, M)$ with the following properties:
\[
L = \Omega(d_m\tilde{L}) \,,\quad \text{and}\quad p = \Omega( d_m^{\frac{d_m}{d_m-1}} \tilde{p})\,.
\]
We obtain that
\begin{equation}
    \begin{aligned}
        \Elearn  &\leq c_0 d_m d_a L_f^2 r_\Mcal^2 p^{-4/d_m}L^{-4/d_m} + c_1 d_a (r_\Acal + M) \frac{r_\Mcal \sqrt{ \max\{L,d_m\} } M_F^{c_2 \max\{L,d_m\}} }{\sqrt{n}} \\
         &\leq c_0 d_m d_a L_f^2 r_\Mcal^2 d_m^{\frac{-4(2d_m-1)}{d_m(d_m-1)}} \left(\tilde{p}\tilde{L}\right)^{-4/d_m} + c_1 d_a (r_\Acal + M) \frac{r_\Mcal \sqrt{ d_m }\sqrt{\tilde{L}} M_F^{c_2 d_m} }{\sqrt{n}} \,,
    \end{aligned}
\end{equation}
where $c_0,c_1,c_2$ are positive absolute constants that may vary from line to line.
Without loss of generality, we can select  $M = r_\Acal +1$ to ensure that the approximation theorem remains applicable.  Note that $r_\Acal\leq L_f r_\Mcal$. We have
\[
\Elearn \leq c d_a d_m^{1/2} L_f r_\Mcal^2 \left( L_f d_m^{\frac{d_m^2-9d_m + 4}{2d_m(d_m-1)}} \left(\tilde{p}\tilde{L}\right)^{-4/d_m} + \frac{\sqrt{\tilde{L}}M_F^{c_2 d_m}}{\sqrt{n}} \right) \,.
\]

\end{proof}

\end{document}